\documentclass[oneside,english, 11pt]{amsart}
\usepackage[T1]{fontenc}
\usepackage[latin9]{inputenc}
\usepackage{amscd}
\usepackage{amsthm,amsmath,amsfonts,amssymb}
\usepackage{verbatim}
\usepackage{lmodern}
\usepackage{graphicx}
\usepackage{enumerate}

\makeatletter
\numberwithin{equation}{section}
\numberwithin{figure}{section}
\theoremstyle{plain}
\newtheorem{thm}{\protect\theoremname}
  \theoremstyle{plain}
  \newtheorem{lem}[thm]{\protect\lemmaname}

\numberwithin{thm}{section}

\newtheorem{cor}[thm]{Corollary}

\theoremstyle{remark}
\newtheorem*{rem}{Remark}
\makeatother

\usepackage{babel}
\providecommand{\propname}{Proposition}

\providecommand{\lemmaname}{Lemma}
\providecommand{\theoremname}{Theorem}


\newcommand{\imag}{\operatorname{Im} \,}
\newcommand{\real}{\operatorname{Re} \,}
\renewcommand{\Im}{\imag}
\renewcommand{\Re}{\real}

\newcommand{\ee}{\epsilon}

\newcommand{\vp}{\varphi}

\newcommand{\diam}{\operatorname{diam}}

\let \le \leqslant
\let \leq \leqslant
\let \ge \geqslant
\let \geq \geqslant
\let \epsilon \varepsilon
\let \phi \varphi
\let \vp \varphi
\let \ell k


\newcommand{\bbP}{\mathbb{P}}

\newcommand{\bbE}{\mathbb{E}}
\newcommand{\bbN}{\mathbb{N}}

\newcommand{\calA}{\mathcal{A}}
\newcommand{\calK}{\mathcal{K}}

\newcommand{\ga}{\alpha}

\newcommand{\gd}{\delta}
\newcommand{\gD}{\Delta}
\newcommand{\gl}{\lambda}
\newcommand{\gb}{\beta}
\newcommand{\gk}{\kappa}
\newcommand{\gn}{\eta}
\newcommand{\go}{\omega}
\newcommand{\gO}{\Omega}
\newcommand{\gvp}{\varphi}
\newcommand{\gr}{\gamma}
\newcommand{\gs}{\sigma}
\newcommand{\gz}{\zeta}

\newcommand{\wbox}{S_{n; \, j, \ell}}
\newcommand{\corner}{p_{n; \, j, \ell}}
\newcommand{\improve}{\phi(\beta)}



\begin{document}

\title{On the continuity of SLE$_{\kappa}$ in $\kappa$}
\author[F. Johansson Viklund]{Fredrik Johansson Viklund}{
\address{Johansson Viklund: Department of Mathematics\\
Columbia University\\
2990 Broadway, 10027 New York, NY, USA
}
\email{fjv@math.columbia.edu}}
\author[S. Rohde]{Steffen Rohde}
\address{Rohde: Department of Mathematics\\
University of Washington, Seattle\\
Box 354350, Seattle, WA 98195, USA
}
\email{rohde@math.washington.edu}
\author[C. Wong]{Carto Wong}
\address{
 Wong: Department of Mathematics\\
University of Washington, Seattle\\
Box 354350, Seattle, WA 98195, USA
}
\email{carto@u.washington.edu}
\maketitle
\begin{abstract}
We prove that for almost every Brownian motion sample, the corresponding SLE$_\kappa$ curves parameterized by capacity exist and change continuously in the supremum norm when $\kappa$ varies in the interval $[0,\kappa_0)$, where $\kappa_0=8(2-\sqrt{3})\approx 2.1$. We estimate the  $\kappa$-dependent modulus of continuity of the curves and also give an estimate on the modulus of continuity for the supremum norm change with $\kappa$.
\end{abstract}
\section{Introduction and Main Result}
The Schramm-Loewner evolution with parameter $\kappa > 0$, SLE$_\kappa$, is a family of random conformally invariant growth processes that arise in a natural manner as scaling limits of certain discrete models from statistical physics. The construction of SLE uses the Loewner equation, a differential equation that provides a correspondence between a real-valued function --- the Loewner driving term --- and an evolving family of conformal maps called a Loewner chain. If the driving term is sufficiently regular the Loewner chain is generated by (or generates, depending on the point of view) a non self-crossing (continuous) curve which is obtained by tracking the image of the driving term under the evolution of conformal maps. For $\kappa$ fixed and positive, SLE$_\kappa$ is defined by taking a standard one-dimensional Brownian motion $B_t$ and using $\sqrt{\kappa} B_t$ as driving term for the Loewner equation. Despite the fact that there are examples of driving terms strictly more regular than Brownian motion whose corresponding Loewner chains are not generated by (continuous) curves, it is known that for each fixed $\kappa > 0$, the SLE$_\kappa$ Loewner chain almost surely is, see \cite{RS} and \cite{LSW04}. The SLE$_\kappa$ curves are random fractals and as $\kappa$ varies their properties change. For example, when $\kappa$ is between $0$ and $4$ the SLE$_\kappa$ path is almost surely simple, but when $\kappa >4$ it almost surely has double points, and when $\kappa \ge 8$ it is space-filling, see \cite{RS}. The Hausdorff dimension of the curve increases with $\kappa$, see \cite{beffara}, while the H\"older regularity in the standard capacity parameterization derived from the Loewner equation decreases as $\kappa$ increases to $8$ and then the regularity increases again, see \cite{JVL}. In all of these results, the exceptional event can depend on $\kappa$ which is held fixed.   

A natural question that seems to have occurred to several researchers, and is suggested by simulation (see \cite{kennedy}), is whether almost surely the SLE$_\kappa$ curves change continuously with $\kappa$ if the Brownian motion sample is kept fixed. Note that \emph{\`a-priori} it is not even clear that there is an event of full measure on which the corresponding SLE$_\kappa$ Loewner chains are simultaneously generated by curves if $\kappa$ is allowed to vary in an interval. 
An analogous question for the deterministic Loewner equation has been asked by Angel: If the Loewner chain corresponding to the driving term $W_t$ is generated by a continuous curve $\gamma$ and if $\kappa <1$, is it true that the Loewner chain of $\kappa W_t$ is generated by a continuous curve, too? This was answered in the negative by Lind, Marshall, and Rohde by constructing a non-random H\"older-$1/2$ driving term $\lambda_t$ with the property that the Loewner chain of $\kappa \lambda_t$ is generated by a curve if and only if $\kappa \neq \pm 1$, see Theorem~1.2 of \cite{LMR}. More precisely, there exists a special $T > 0$ such that the Loewner chain of $\lambda_t$ is generated by a curve 
$\gamma$ for $t \in [0,T)$ but as $t$ tends to $T$ the curve spirals around a disc in the upper half plane and the limit of $\gamma(t)$ as $t \to T-$ does not exist. The function $\lambda_t$ of this example is strictly more regular than Brownian motion. Indeed, it is well-known that for every $\alpha < 1/2$ the Brownian motion sample path is almost surely H\"older-$\alpha$, but it is almost surely \emph{not} H\"older-1/2. 

In this paper we will prove that SLE$_\kappa$ almost surely does not exhibit the pathological behavior described above, at least not for sufficiently small $\kappa$. Let us state our main result in a slightly informal manner, see Section~\ref{proof-sect} for a precise statement of the full result; we prove more than is stated here. (In particular we will also estimate explicitly the relevant H\"older exponents.) In order to state the theorem, define $\kappa_0 = 8(2-\sqrt{3}) \approx 2.1$.
\begin{thm}\label{main-thm}
For almost every Brownian motion sample $B_t$, the SLE$_\kappa$ Loewner chains driven by $(\sqrt{\kappa} B_t, \, t \in [0,1])$, where $\kappa \in [0, \kappa_0)$, are simultaneously generated by curves that if parameterized by capacity change continuously with $\kappa$ in the supremum norm. 
\end{thm}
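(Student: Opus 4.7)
The approach I would take is to treat the Brownian motion sample $B_t$ as fixed on a full-measure event, and to prove joint regularity of the three-parameter family $(t, y, \kappa) \mapsto \hat{f}_t^{(\kappa)}(iy)$ on $[0,1] \times (0,1] \times [0, K]$, where $K < \kappa_0$ is arbitrary and $\hat{f}_t^{(\kappa)}(z) = f_t^{(\kappa)}(z + \sqrt{\kappa} B_t)$ is the centered inverse Loewner map driven by $\sqrt{\kappa} B_t$. Existence of the $\SLE_\kappa$ trace $\gamma^{(\kappa)}$ and its continuity in $\kappa$ will both drop out of such an estimate once we pass to the boundary $y \to 0^+$, since on any event where this map is jointly uniformly continuous, the limit $\gamma^{(\kappa)}(t) := \lim_{y \to 0^+} \hat{f}_t^{(\kappa)}(iy) + \sqrt{\kappa} B_t$ defines a continuous path for each $\kappa \in [0,K]$, and the resulting map $\kappa \mapsto \gamma^{(\kappa)}$ is automatically continuous in sup norm.

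The first ingredient is a uniform-in-$\kappa$ moment estimate for $|(\hat{f}_t^{(\kappa)})'(iy)|^b$, in the spirit of the Rohde--Schramm derivative bound. Its proof via a Girsanov tilt depends continuously on $\kappa$, so on any compact subinterval of $(0, 8)$ the constants and admissible exponents $b$ can be made uniform; this yields uniform control of moments of the $t$- and $y$-increments of $\hat{f}_t^{(\kappa)}(iy)$ by standard estimates (using the Loewner equation and Koebe distortion respectively). The second, and novel, ingredient is a bound on the $\kappa$-derivative. Differentiating the Loewner ODE $\partial_t g_t^{(\kappa)} = 2/(g_t^{(\kappa)} - \sqrt{\kappa} B_t)$ in the parameter yields a linear inhomogeneous equation for $\partial_\kappa g_t^{(\kappa)}(z)$ whose Duhamel solution is an integral involving $(g_s^{(\kappa)}(z) - \sqrt{\kappa} B_s)^{-2}$, $(g_s^{(\kappa)})'(z)$, and the Brownian sample. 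Translating this to the backward flow, one can control moments of $|\partial_\kappa \hat{f}_t^{(\kappa)}(iy)|$ by the product of $\sup_t |B_t|$ with a higher moment of $|(\hat{f}_t^{(\kappa)})'(iy)|$.

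With moment estimates on the $t$-, $y$-, and $\kappa$-increments of $\hat{f}_t^{(\kappa)}(iy)$ in hand, a dyadic-grid Kolmogorov/Garsia argument combined with Borel--Cantelli produces, on a single full-measure event, joint H\"older continuity of $(t, y, \kappa) \mapsto \hat{f}_t^{(\kappa)}(iy)$ on $[0, 1] \times (0, 1] \times [0, K]$. The uniform H\"older continuity in $y$ then gives the existence of the trace $\gamma^{(\kappa)}(t)$ for every $\kappa \in [0,K]$, with a modulus of continuity that can itself be taken H\"older in $\kappa$. Continuity of $\kappa \mapsto \gamma^{(\kappa)}$ in sup norm is extracted from the triangle inequality
$$\sup_t |\gamma^{(\kappa)}(t) - \gamma^{(\kappa')}(t)| \le 2 \sup_{t, \, \tilde{\kappa} \in [0, K]} |\gamma^{(\tilde{\kappa})}(t) - \hat{f}_t^{(\tilde{\kappa})}(iy) - \sqrt{\tilde{\kappa}} B_t| + \sup_t |\hat{f}_t^{(\kappa)}(iy) - \hat{f}_t^{(\kappa')}(iy)| + |\sqrt{\kappa} - \sqrt{\kappa'}| \sup_t |B_t|,$$
by choosing $y$ as a suitable small power of $|\kappa - \kappa'|$ to balance the first (which shrinks as $y \to 0$) against the second (which grows in $1/y$).

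The main obstacle I expect is controlling $\partial_\kappa \hat{f}_t^{(\kappa)}$ finely enough to close the dyadic argument. Because the parameter derivative accumulates along the entire interval $[0, t]$ and its Duhamel representation pulls in an extra factor $|(\hat{f}_s^{(\kappa)})'|$ inside the integral, one is forced to estimate a strictly higher moment of the Rohde--Schramm derivative than is needed merely for existence of the trace. This enlarged moment budget narrows the admissible range of $\kappa$ well below the Rohde--Schramm threshold $\kappa < 8$; the value $\kappa_0 = 8(2 - \sqrt{3})$ should arise as the precise $\kappa$ at which the two constraints --- the one yielding finite moments sufficient for H\"older regularity in $t$ and $y$, and the additional one imposed by the $\kappa$-differentiation --- cease to admit a common exponent.
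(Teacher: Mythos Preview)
Your proposal is essentially the paper's approach. Both arguments (i) use the Rohde--Schramm/Lawler moment bounds on $|(\hat f_t^{(\kappa)})'(iy)|$, (ii) control the $\kappa$-increment of $\hat f_t^{(\kappa)}(iy)$ through the linear ODE obtained by varying the driving term in the Loewner equation, and (iii) run a dyadic Borel--Cantelli argument over a Whitney-type grid in $(t,y,\kappa)$ to pass to $y\to 0^+$ and extract joint H\"older continuity.

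Two organizational differences are worth noting. First, where you differentiate in $\kappa$ and invoke Duhamel, the paper works with the finite-difference version directly: for two driving terms with $\sup_s|W^{(1)}_s-W^{(2)}_s|=\epsilon$, the same linear ODE yields the deterministic bound $|f_t^{(1)}(z)-f_t^{(2)}(z)|\le c\,\epsilon\, y^{-\varphi(\beta)}\log y^{-1}$ whenever $|(f_t^{(1)})'(z)|\le c\,y^{-\beta}$, with the sharp exponent $\varphi(\beta)=\sqrt{(1+\beta)/2}$ (the crude bound $\epsilon/y$ already suffices for the threshold $\kappa_0$; the refinement only improves the H\"older exponents). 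Second, rather than bounding moments of the $\kappa$-increment and applying Kolmogorov, the paper first obtains almost sure pointwise bounds $|F'(p_{n;j,\ell})|\le c\,2^{n\beta}$ at the Whitney corners via Chebyshev and Borel--Cantelli, and only then feeds these into the deterministic perturbation lemma. This makes the source of $\kappa_0$ transparent: one needs the $\kappa$-side of the Whitney box to have length $2^{-nq}$ with $q>\varphi(\beta)$ (so the perturbation bound decays), while the union bound over the $2^{n(q+2)}$ boxes forces $q<\sigma(\kappa,\beta)$; a common $q$ exists for some $\beta<1$ exactly when $\kappa<\kappa_0=8(2-\sqrt 3)$. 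Your heuristic that a ``strictly higher moment'' is required is slightly off---it is not the moment order but this box-count versus perturbation-growth competition that produces the threshold.
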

See Section~\ref{sect-Whitney} for a sketch of the proof of Theorem~\ref{main-thm} and Theorem~\ref{main-thm2} for a precise statement.

Let us make a few remarks. The restriction to $t \in [0,1]$ is only a convenience and a similar result for $t \in [0,\infty)$ holds if we consider instead continuity with respect to the topology of uniform convergence on compact subintervals. We emphasize that we prove that the curves change continuously with $\kappa$ when the curves have a particular parameterization. This is a stronger topology than the one generated by the now standard metric used by Aizenman and Burchard in \cite{AB} which allows for increasing reparameterization of the curves. It can be checked that Theorem~\ref{main-thm} also holds when $\kappa > \kappa_\infty:= 8(2+\sqrt{3})$ is allowed to vary. This may seem counterintuitive but can be viewed as a consequence of the fact that the regularity of the SLE$_\kappa$ curve in the capacity parameterization increases with $\kappa$ when $\kappa \ge 8$. (Intuitively, by duality, the boundary of the SLE$_\kappa$ hull becomes more and more like the real line when $\kappa$ becomes large and so the H\"older regularity of the curve approaches the minimum of $1/2$ and that of the driving term, which is the time-zero regularity of any chordal Loewner curve in the capacity parameterization.) 

We end with a question. From the point of view of probability theory what we do is to consider a specific coupling of SLE$_\kappa$ processes for different $\kappa$ and prove almost sure existence of the curves and continuity as $\kappa$ varies. As was realized by Schramm and Sheffield \cite{SS} it is possible to obtain SLE$_\kappa$ curves by a mechanism quite different from the usual one using the Loewner equation. Very roughly speaking, the construction considers certain ``flow-lines'' derived from the Gaussian free field (GFF) and by varying a parameter one gets SLE$_\kappa$ for different $\kappa$, see \cite{MS} and the references therein. It seems natural to ask whether a similar continuity result as the one proved in this paper holds for GFF derived couplings of SLE$_\kappa$ for different values of $\kappa$. 
\subsection{Overview of the paper}The organization of our paper is as follows. In Section~\ref{DLE-sect} we discuss the deterministic (reverse-time) Loewner equation and derive Lemma~\ref{L:Df} which estimates the perturbation of a Loewner chain in terms of a small supremum norm perturbation of its driving term. In Section~\ref{SLE-sect} we start by giving the general set up of the proof of the main result along with a sketch its proof. We then give the necessary probabilistic estimates based on previously known moment bounds for the spatial derivative of the SLE map. The complete statement of our main result is given in Theorem~\ref{main-thm2} of Section~\ref{proof-sect}, where the work of Sections \ref{DLE-sect} and \ref{SLE-sect} is then combined to prove Theorems \ref{main-thm} and \ref{main-thm2}. We also prove Theorem~\ref{main-thm3}, a quantitative version of Theorem~\ref{main-thm}.
\subsection*{Acknowledgements}
Fredrik Johansson Viklund acknowledges support from the Simons Foundation, Institut Mittag-Leffler, and the AXA Research Fund, and the hospitality of the Mathematics Department of University of Washington, Seattle. The research of Steffen Rohde and Carto Wong was partially supported by
NSF Grants DMS-0800968 and DMS-1068105.

\section{Deterministic Loewner Equation}\label{DLE-sect}
Let $W_t$ be a real-valued continuous function defined for $t \in [0, \infty)$ and set 
\begin{equation}\label{LPDE}
\partial_{t}f_{t}(z)=-\partial_z f_{t}(z)\frac{2}{z-W_{t}},\quad f_{0}(z)=z.
\end{equation}
This is the (chordal) Loewner partial differential equation and the function $W_t$ is called the Loewner driving term. (As we will only work with the chordal version of the Loewner equation in this paper, we will usually omit the word ``chordal''.) A solution $(f_t(z), \, t\ge 0, \, z \in \mathbb{H})$ exists whenever $W_t$ is measurable and for each $t \ge 0$, $f_t: \mathbb{H} \to H_t$ is a conformal map from the upper half-plane onto a simply connected domain $H_t=\mathbb{H} \setminus K_t$, where $K_t$ is a compact set. We call the family $(f_t)$ of conformal maps a Loewner chain and $(f_t, W_t)$ a Loewner pair. The family of image domains $(H_t)$ is continuously decreasing in the Carath\'eodory sense. We say that the Loewner chain $(f_t)$ is generated by a curve if there is a curve $\gamma(t)$ (that is, a continuous function of $t$ taking values in $\overline{\mathbb{H}}$) with the property that for every $t \ge 0$, $H_t$ is the unbounded connected component of $\mathbb{H}\setminus \gamma[0,t]$. Theorem~4.1 of \cite{RS} gives a convenient sufficient condition for $(f_t)$ to be generated by a curve:
\begin{thm}[\cite{RS}]\label{sufficient}
Let $T>0$ and let $W_t: [0, T] \to \mathbb{R}$ be continuous and $(f_t, W_t)$ the corresponding Loewner pair. Suppose that
\begin{equation}\label{nov18.1}
\beta(t):=\lim_{y \to 0+}f_t(W_t+iy)
\end{equation}
exists for $t \in [0,T]$ and is continuous. Then $(f_t, 0 \le t \le T)$ is generated by the curve $\beta$.
\end{thm}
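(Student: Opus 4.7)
My plan is to prove directly that, for each $t \in [0,T]$, the domain $H_t$ equals the unbounded connected component of $\HH \setminus \beta[0,t]$. The easy containment $H_t \subseteq (\text{unbounded component})$ follows immediately: since $f_s(W_s+iy) \in H_s$ for $y>0$, the limit $\beta(s)$ lies in $\overline{H_s}\setminus H_s \subseteq \partial H_s$, and since $H_t\subseteq H_s$ whenever $s\le t$, we have $\beta(s)\notin H_t$. Continuity then makes $\beta[0,t]$ a compact connected subset of $\overline{\HH}\setminus H_t$, so the connected set $H_t$, which contains a neighborhood of $\infty$, must sit inside the unbounded component of $\HH\setminus\beta[0,t]$.

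For the reverse containment I would take $z\in K_t\cap\HH$ and let $\tau_z := \inf\{s : z\in K_s\} \le t$, so that $g_s(z) := f_s^{-1}(z)$ is well defined for $s<\tau_z$. The analysis splits into two cases. In the \emph{tip case}, $g_s(z) - W_s \to 0$ as $s\uparrow\tau_z$ in an appropriate hyperbolic sense, and the aim is to conclude $z = \beta(\tau_z)$. The engine here is a local continuity statement for $f_s$ at $W_s$: if $w_n\in\HH$ approach $W_{s_n}$ with $s_n\to\tau_z$ while remaining at bounded hyperbolic distance from the ray $W_{s_n}+i(0,\infty)$, then $f_{s_n}(w_n)\to\beta(\tau_z)$. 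This is to be derived from a Koebe-type distortion estimate on hyperbolic balls together with the hypothesized radial limit and its continuity in $s$. In the \emph{pinching case}, $g_s(z)$ stays at positive hyperbolic distance from the ray; then near $\tau_z$ the tip $\beta(s)$ must wind around $z$, so that $\beta[\tau_z-\delta,\tau_z]$ encloses $z$ in a bounded component of $\HH\setminus\beta[0,t]$.

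The main obstacle I anticipate is the local tip continuity of $f_s$ at $W_s$, which must be extracted from only the one-dimensional radial-limit hypothesis together with continuity in $s$. My approach is a normal-families argument: along any sequence producing a limit different from $\beta(\tau_z)$, one extracts a suitable rescaled conformal limit of the maps $f_{s_n}$, and this limit is shown to be inconsistent either with the existence of the radial limit $\beta(s_n)$ or with its continuity as a function of $s$. Once this tip continuity is in hand, both branches of the case analysis close, yielding $K_t\cap\HH \subseteq \beta[0,t] \cup \bigcup\{\text{bounded components of }\HH\setminus\beta[0,t]\}$, which together with the easy inclusion shows that $\beta$ generates the Loewner chain on $[0,T]$.
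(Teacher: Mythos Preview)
The paper does not prove this theorem at all: it is quoted as Theorem~4.1 of \cite{RS} and used as a black box, with no argument supplied in the present paper. There is therefore nothing here to compare your proposal against.

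For what it is worth, your outline is close in spirit to the proof in \cite{RS}, but differs in one structural point. In \cite{RS} one first proves the deterministic fact that for \emph{every} $z$ with $\tau_z<\infty$ one has $g_s(z)-W_s\to 0$ as $s\uparrow\tau_z$; so your ``pinching case'', in which $g_s(z)$ stays at positive hyperbolic distance from the vertical ray, does not actually arise. The enclosure of swallowed points is instead obtained by showing that $\partial K_t\cap\HH\subseteq\beta[0,t]$ (via the above limit and the assumed continuity of $\beta$) and then invoking connectedness. Your proposed normal-families argument for ``tip continuity'' is plausible but is not how \cite{RS} proceeds, and you have left it at the level of a sketch; if you want a self-contained proof you should either carry that argument through or, more simply, follow \cite{RS} and first establish $g_s(z)\to W_{\tau_z}$.
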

It is sometimes convenient to write
\[
\hat{f}_t(z)=f_t(W_t+z).
\]
We will usually refer to \eqref{LPDE} as the \emph{Loewner PDE}.

There is another version of the Loewner equation that we shall use, namely the \emph{reverse-time Loewner ODE}
\begin{equation}\label{rev-LODE}
\partial_{t}h(z)=-\frac{2}{h_{t}(z)-W_{t}},\quad h_{0}(z)=z.
\end{equation}
If $(f_t)$ is the solution to the Loewner PDE \eqref{LPDE} with driving term $W_t$ and $(h_t)$ the solution to \eqref{rev-LODE} with driving term $W_{T-t}$, then it is not difficult to see that the conformal maps $f_{T}(z)$ and $h_{T}(z)$ are the same. Note that this identity holds only at
the special time $T$. In particular, the families $(h_{t})$ and
$(f_{t})$ are in general not the same. It is often easier to work with \eqref{rev-LODE} rather than directly with \eqref{LPDE}.

The standard Koebe distortion theorem for conformal maps gives a certain uniform control of the change of a conformal map evaluated at different points at distance comparable to their distance to the boundary. We will need similar estimates to control the change of a Loewner chain evaluated at different times and driven by ``nearby'' driving terms. The magnitude of the allowed perturbation depends on the distance to the boundary of $\mathbb{H}$ and on the behavior of the spatial derivative of the conformal map. We first state the well-known estimates for the $t$-direction, see, e.g., \cite{JVL} for proofs.
\begin{lem}  \label{lemma34} 
    There exists a constant $0 < c < \infty$ such that the following holds. Suppose that $f_t$ satisfies the chordal Loewner PDE \eqref{LPDE} and that $z=x+iy \in \mathbb{H}$. Then for $0 \le s \le y^2$, 
\begin{equation}  \label{nov19.9} 
           c^{-1}  \leq  \frac{\left| f_{t+s}'(z) \right|}{\left| f_t'(z) \right|} \leq   c
\end{equation} 
and  
\begin{equation}  \label{mar14.7}
 \left| f_{t+s}(z) - f_t(z) \right| \leq c \, y \left| f_t'(z) \right| .
\end{equation}
\end{lem}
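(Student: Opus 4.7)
My plan is to reduce both bounds to the case $t=0$ by exploiting the concatenation property of the Loewner chain. Let $\tilde f_s$ denote the Loewner chain solving \eqref{LPDE} from $\tilde f_0=\mathrm{id}$ with shifted driving $\tilde W_u := W_{t+u}$, so that $f_{t+s} = f_t \circ \tilde f_s$. The chain rule yields
\[
f'_{t+s}(z) = f'_t\!\bigl(\tilde f_s(z)\bigr)\,\tilde f'_s(z), \qquad f_{t+s}(z)-f_t(z) = f_t\!\bigl(\tilde f_s(z)\bigr) - f_t(z),
\]
so it suffices to prove, for $s \le y^2$, that (i) $|\tilde f'_s(z)|$ is bounded above and below by universal constants, (ii) $\tilde f_s(z)$ lies within Euclidean distance $\le 2y$ of $z$ with $\Im \tilde f_s(z)\in[y,\sqrt{5}\,y]$, and (iii) $|f'_t(\tilde f_s(z))|\asymp|f'_t(z)|$.

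For (i) and (ii), I would pass to the reverse-time flow: at the single time $s$, $\tilde f_s$ equals the solution $\tilde h_s$ of the reverse-time ODE \eqref{rev-LODE} driven by $u\mapsto W_{t+s-u}$ and started from the identity. A direct computation shows $\partial_u\Im\tilde h_u(z) = 2\Im\tilde h_u(z)/|\tilde h_u(z)-W_{t+s-u}|^2\ge 0$, so $\Im\tilde h_u(z)\ge y$ and therefore $|\tilde h_u(z)-W_{t+s-u}|\ge y$ throughout $u\in[0,s]$. Differentiating the reverse ODE in $z$ gives $|\partial_u\log\tilde h'_u(z)| = 2/|\tilde h_u(z)-W_{t+s-u}|^2 \le 2/y^2$; integrating over $[0,s]$ with $s\le y^2$ then yields $e^{-2}\le|\tilde f'_s(z)|\le e^2$. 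The reverse ODE itself gives $|\tilde h_s(z)-z|\le 2s/y \le 2y$, and $\partial_u(\Im\tilde h_u)^2 \le 4$ produces $\Im\tilde h_s(z)\le\sqrt{5}\,y$.

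The main obstacle is step (iii), because $|\tilde f_s(z)-z|$ may be of order $y$ --- too large for the standard ball-Koebe distortion theorem on the disc $B(z,y/2)\subset\HH$ alone. I would argue via the hyperbolic form of Koebe distortion: the two points $z$ and $\tilde f_s(z)$ both lie in the strip $\{w\in\HH:y\le\Im w\le\sqrt{5}\,y\}$ at Euclidean distance at most $2y$, hence at hyperbolic distance in $\HH$ bounded by a universal constant. Since $f_t$ is conformal on $\HH$, Koebe distortion applied in the hyperbolic metric (or, equivalently, chaining a bounded number of ball-Koebe estimates along a short path from $z$ to $\tilde f_s(z)$ that stays at height $\ge y$ above $\RR$) gives $|f'_t(\tilde f_s(z))|/|f'_t(z)|\in[c^{-1},c]$ for a universal $c$.

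Combining (i)--(iii) through the chain-rule identity yields \eqref{nov19.9}. Finally \eqref{mar14.7} follows either by integrating the Loewner PDE,
\[
f_{t+s}(z)-f_t(z) = -\int_t^{t+s}\frac{2\,f'_u(z)}{z-W_u}\,du,
\]
using $|z-W_u|\ge y$, the just-proved \eqref{nov19.9}, and $s\le y^2$; or equivalently from $f_{t+s}(z)-f_t(z) = f_t(\tilde f_s(z))-f_t(z)$ together with (ii), (iii), and the mean-value inequality for conformal maps.
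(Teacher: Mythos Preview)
The paper does not actually prove this lemma; it simply states the result as well known and refers the reader to \cite{JVL}. There is therefore no in-paper argument to compare against.

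Your proof is correct. The concatenation identity $f_{t+s}=f_t\circ\tilde f_s$ follows from the semigroup property of the forward flow, and your identification of $\tilde f_s$ with the time-$s$ reverse flow driven by $u\mapsto W_{t+s-u}$ is exactly the relation stated in the paper between \eqref{LPDE} and \eqref{rev-LODE}. The estimates in steps (i) and (ii) are routine consequences of $|\tilde h_u(z)-W_{t+s-u}|\ge\Im\tilde h_u(z)\ge y$. Step (iii) is the only point requiring care, and your handling is fine: since $z$ and $\tilde f_s(z)$ both have imaginary part in $[y,\sqrt5\,y]$ and Euclidean separation at most $2y$, their hyperbolic distance in $\HH$ is bounded by a universal constant (indeed $\cosh d_{\HH}\le 1+\tfrac{(2y)^2}{2y\cdot y}=3$), so chaining the ball-Koebe distortion theorem along the straight segment, which stays at height $\ge y$, gives $|f_t'(\tilde f_s(z))|\asymp|f_t'(z)|$ with universal constants. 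Either of your two routes to \eqref{mar14.7} then works; the first, integrating the PDE directly using $|z-W_u|\ge y$ and the just-proved \eqref{nov19.9}, is perhaps the cleanest.
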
 
The next lemma considers a supremum norm perturbation of the driving term. (One can treat, e.g., the $L^1$ norm with nearly identical arguments.) The most important estimate is \eqref{nov18.4.1} which has appeared in a radial setting in \cite{JV}. It would be sufficient to prove our main result. The refinement \eqref{nov18.4.2} will be used to obtain better quantitative estimates on H\"older exponents using information about the derivative. We stress that we derive \eqref{nov18.4.1} with no assumptions on the driving terms other than the existence of a bound on their supremum norm distance. 
\begin{lem} \label{L:Df}
Let $0 < T < \infty$. Suppose that for $t \in[0 , T]$, $f_t^{(1)}$ and $f_t^{(2)}$ satisfy the chordal Loewner PDE \eqref{LPDE} with $W_t^{(1)}$ and $W_t^{(2)}$, respectively, as driving terms.
Suppose that
\[
\ee=\sup_{s \in [0,T]}\left|W^{(1)}_s - W^{(2)}_s \right|.
\]
Then if $z=x+iy \in \mathbb{H}$,
\begin{equation}\label{nov18.4.1}
\sup_{t \in [0,T]}\left|f_t^{(1)}(z)-f_t^{(2)}(z)\right| \le  \ee \left( \frac{\sqrt{4T+y^2}}{y} -1\right).
\end{equation}
Moreover, for every $t \in [0,T]$, 
\begin{multline}\label{nov18.4.2}
\left|f_t^{(1)}(z)-f_t^{(2)}(z)\right|  \\ \le  \ee  \exp \left\{ \frac{1}{2}  \left[ \log  \frac{I_{t,y} \left|(f_t^{(1)})'(z)\right|}{y}   \log \frac{I_{t,y}\left|(f_t^{(2)})'(z)\right|}{y}  \right]^{1/2} + \log \log \frac{I_{t,y}}{y}\right\},
\end{multline}
where $I_{t,y}=\sqrt{4t+y^2}$.
\end{lem}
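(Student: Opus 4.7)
The plan is to work with the reverse-time Loewner ODE \eqref{rev-LODE}. Since the forward map $f_t(z)$ equals $h_t(z)$ when $h$ is driven by $s \mapsto W_{t-s}$, it suffices to estimate the difference of two such reverse-time flows. Fix $t \in [0,T]$ and let $h_s^{(i)}$ solve \eqref{rev-LODE} with driving $s \mapsto W_{t-s}^{(i)}$, so that $h_t^{(i)}(z) = f_t^{(i)}(z)$. Writing $D_s = h_s^{(1)}(z) - h_s^{(2)}(z)$ and $u_s^{(i)} = h_s^{(i)}(z) - W_{t-s}^{(i)}$, a direct computation gives
\begin{equation*}
\partial_s D_s = \frac{2(D_s - \Delta W_s)}{u_s^{(1)} u_s^{(2)}}, \qquad |\Delta W_s| \le \ee,
\end{equation*}
so $\partial_s \log(|D_s| + \ee) \le 2/(|u_s^{(1)}| \, |u_s^{(2)}|)$, and the whole task reduces to bounding $\int_0^t 2/(|u_s^{(1)}| \, |u_s^{(2)}|) \, ds$ from above.

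The key ingredient is the exact identity
\begin{equation*}
\int_0^t \frac{2 \, ds}{|u_s^{(i)}|^2} = \log \frac{\Im h_t^{(i)}(z)}{y},
\end{equation*}
obtained from $\partial_s \log \Im h_s^{(i)} = 2/|u_s^{(i)}|^2$, together with $\Im h_s^{(i)}(z) \le I_{s,y}$, which follows from $\partial_s (\Im h_s)^2 \le 4$. For \eqref{nov18.4.1}, I would apply the AM--GM inequality $2/(|u^{(1)}| \, |u^{(2)}|) \le 1/|u^{(1)}|^2 + 1/|u^{(2)}|^2$; combined with the identity and the upper bound on $\Im h^{(i)}$, this yields $\int_0^t 2/(|u^{(1)}| \, |u^{(2)}|) \, ds \le \log(I_{t,y}/y)$, and exponentiating the Gronwall inequality produces $|D_t| + \ee \le \ee \cdot I_{t,y}/y$, which is \eqref{nov18.4.1}. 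No information about derivatives enters in this argument, as advertised.

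For the refined estimate \eqref{nov18.4.2}, I would replace AM--GM by Cauchy--Schwarz to obtain the geometric mean
\begin{equation*}
\int_0^t \frac{2 \, ds}{|u^{(1)}| \, |u^{(2)}|} \le \left[\log\frac{\Im h_t^{(1)}(z)}{y} \cdot \log \frac{\Im h_t^{(2)}(z)}{y}\right]^{1/2}
\end{equation*}
in place of the arithmetic mean $\log(I_{t,y}/y)$. Since $h_t^{(i)}(z) = f_t^{(i)}(z)$ at the final time, what remains is to express $\Im f_t^{(i)}(z)$ in terms of $|(f_t^{(i)})'(z)|$. Koebe's distortion theorem gives $y|(f_t^{(i)})'(z)| \le 4 \Im f_t^{(i)}(z) \le 4 I_{t,y}$, while from the reverse-time flow one checks directly that both $|(h_s^{(i)})'| \cdot \Im h_s^{(i)}$ and $\Im h_s^{(i)}/|(h_s^{(i)})'|$ are nondecreasing from $y$, yielding the complementary bound $\Im h_t^{(i)}(z) \ge y/|(h_t^{(i)})'(z)|$. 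These together pin down $\log(\Im h_t^{(i)}(z)/y)$ to within a factor related to $\log(I_{t,y}|(f_t^{(i)})'(z)|/y)$; the additive $\log\log(I_{t,y}/y)$ in \eqref{nov18.4.2} will, I expect, arise from a careful accounting of the Koebe constants and of the overhead in this replacement. Making this quantitative conversion precise, so as to land exactly on the symmetric expression appearing in \eqref{nov18.4.2}, is the main obstacle I anticipate.
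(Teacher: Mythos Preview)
Your argument for \eqref{nov18.4.1} is correct and essentially matches the paper's: both reduce to the bound $\int_0^t 2/(|u^{(1)}||u^{(2)}|)\,ds \le \log(I_{t,y}/y)$, and your Gronwall inequality $\partial_s\log(|D_s|+\ee)\le |\psi(s)|$ is just a differential-inequality version of the paper's explicit integrating-factor formula. Your AM--GM and the paper's Cauchy--Schwarz both yield the same bound on $\int_0^t|\psi|$.

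For \eqref{nov18.4.2}, however, there is a real gap, and it is deeper than the ``quantitative conversion'' you flag. Your Gronwall step bounds $|D_t|+\ee$ by $\ee\exp\bigl(\int_0^t|\psi|\bigr)$, and you then hope to convert $\int_0^t|\psi|\le\bigl[\log(\Im f_t^{(1)}/y)\log(\Im f_t^{(2)}/y)\bigr]^{1/2}$ into the expression in \eqref{nov18.4.2}. This cannot work. Take $W^{(1)}=W^{(2)}\equiv 0$ and $z=iy$: then $h_s(iy)=i\sqrt{y^2+4s}$, so $\Im f_t=I_{t,y}$ and $|f_t'|=y/I_{t,y}$, hence $\log(I_{t,y}|f_t'|/y)=0$ while $\log(\Im f_t/y)=\log(I_{t,y}/y)$ can be arbitrarily large. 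In this example your exponential bound is exactly $\ee\,I_{t,y}/y$, whereas \eqref{nov18.4.2} gives $\ee\log(I_{t,y}/y)$; no conversion of $\Im f_t$ to $|f_t'|$ will bridge this.

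What the paper does differently is to keep the integrating-factor formula
\[
|H(t)|\le \ee\int_0^t e^{\int_s^t \Re\psi(r)\,dr}\,|\psi(s)|\,ds
\]
and treat the exponent and the prefactor separately. The crucial point is that $\Re\psi = 2(x_su_s - y_sv_s)/(|z^{(1)}|^2|z^{(2)}|^2)$ contains the \emph{negative} term $-2y_sv_s/(|z^{(1)}|^2|z^{(2)}|^2)$, which your bound $|\psi|$ discards. Dropping only that negative piece and applying Cauchy--Schwarz to the remaining $2x_su_s$ term leads to $\int 2x_s^2/(x_s^2+y_s^2)^2\,ds$, which by the Loewner identities equals $\tfrac12\bigl[\log|(f_t^{(i)})'|+\log(\Im f_t^{(i)}/y)\bigr]$; this is precisely where the factor $\tfrac12$ in \eqref{nov18.4.2} originates. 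The remaining $\int_0^t|\psi|\,ds\le\log(I_{t,y}/y)$ then produces the additive $\log\log(I_{t,y}/y)$. So to recover \eqref{nov18.4.2} you must go back to the linear ODE for $D_s$, retain $\Re\psi$ in the exponent rather than $|\psi|$, and exploit the sign structure there.
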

\begin{rem}
Since the conformal maps are normalized at infinity there exists a constant $c< \infty$ depending only on $T$ such that for $j=1,2$, $|(f^{(j)}_t)'(z)| \le c(y^{-1}+1)$ for all $z=x+iy \in \mathbb{H}$ and all $t \in [0,T]$. (This is a well-known property of conformal maps but can also be seen from the proof to follow.) Thus if $y \le 1$, say, and $c_1<\infty$ and $\beta_1 \ge -1$ are such that $|(f_t^{(1)})'(z)| \le c_1 y^{-\beta_1}$, then \eqref{nov18.4.2} can be written
\begin{equation}\label{improve-est}
\left|f_t^{(1)}(z)-f_t^{(2)}(z)\right| \le c' \ee y^{-\sqrt{(1+\beta_1)/2}} \log (I_{t,y}y^{-1}),
\end{equation}
where $c' < \infty$ depends only on $T, c_1, \beta_1$.
\end{rem}
\begin{proof}[Proof of Lemma~\ref{L:Df}]
We will start by proving \eqref{nov18.4.2}. Let $t \in (0,T]$ be fixed.
Write 
\[\tilde{W}^{(j)}_s=W^{(j)}_{t-s}, \quad j=1,2.
\]
Let $z=x+iy$ be fixed and set
\[z_s^{(j)}=h^{(j)}_s(z)-\tilde{W}^{(j)}_{s}, \quad j=1,2,
\] where $h^{(j)}$ are assumed to solve \eqref{rev-LODE} with $\tilde{W}^{(j)}, \, j=1,2,$ respectively, as driving terms. 

Define 
\[H(s)=h^{(1)}_s(z)-h^{(2)}_s(z)\] and note that \[H(t) = f_{t}^{(1)}(z) - f_{t}^{(2)}(z).\] Our goal will be to estimate $|H(t)|$. We differentiate $H(s)$ with respect to $s$ and use \eqref{rev-LODE} to obtain a linear differential equation
\[
\dot{H}(s)-H(s)\psi(s)=\left(\tilde W^{(2)}_s-\tilde W^{(1)}_s\right)\psi(s),\]
where  
\[
\psi(s) = \frac{2}{z_s^{(1)}z_s^{(2)}}.
\]
This differential equation can be integrated and with $u(r) = \exp\{-\int_0^r \psi(s) \, ds\}$ we find 
\begin{equation}\label{gron}
H(s)=u(s)^{-1}\left(H(0)+\int_0^s \left(\tilde W^{(2)}_r-\tilde W^{(1)}_r \right)u(r) \psi(r) \, dr \right).
\end{equation}
Hence, upon setting $H(0)=0$,
\[
|H(t)| \le \int_0^{t}\left|\tilde W^{(2)}_s-\tilde W^{(1)}_s\right| e^{\int_s^{t} \Re \psi(r) \, dr}|\psi| \, ds.
\]
Consequently, 
\begin{align}\label{G}
\left|f_{t}^{(1)}(z) - f_{t}^{(2)}(z)\right| & \le \left(\sup_{0 \le s \le t}\left|W^{(2)}_s - W^{(1)}_s \right| \right) \left(\int_0^{t} e^{\int_s^{t} \Re \psi(r) \, dr}|\psi| \, ds   \right)\\ \nonumber
& \le \ee \int_0^{t} e^{\int_s^{t} \Re \psi(r) \, dr}|\psi| \, ds ,
\end{align}
and we see that we need to estimate the last factor in \eqref{G}. We will first prove the bound corresponding to \eqref{nov18.4.2}. Set
\[
x_s+iy_s=z_s^{(1)}, \quad u_s+iv_s=z_s^{(2)}.
\]
Note that \eqref{rev-LODE} implies that for $0 \le s \le t$, 
\begin{equation}\label{nov1211.1}
\log \left(\frac{y_s}{y} \right) = \int_0^s \frac{2}{x_s^2+y_s^2}\, ds, \quad \log | (h_s^{(1)})'(z)|  = \int_0^s \frac{2(x_s^2-y_s^2)}{(x_s^2+y_s^2)^2}\, ds,
\end{equation}
and similarly for $h^{(2)}$. In particular,
\[
\log \left| (f^{(1)}_t)'(z) \right| = \int_0^{t} \frac{2(x_s^2-y_s^2)}{(x_s^2+y_s^2)^2}\, ds,
\]
and similarly for $f_t^{(2)}$.
By the Cauchy-Schwarz inequality we have that
\begin{align*}
\int_s^{t}\Re  \psi(s) \, ds & = \int_s^{t} \frac{2(x_su_s-y_sv_s)}{(x_s^2+y_s^2)(u_s^2+v_s^2)} \, ds \\
& \le \left( \int_0^{t}\frac{2x_s^2}{(x_s^2+y_s^2)^2} \, ds \right)^{1/2} \left( \int_0^{t}\frac{2u_s^2}{(u_s^2+v_s^2)^2} \, ds \right)^{1/2}.
\end{align*}
Here we used that $y_s$ and $v_s$ are always non-negative. We can write
\[
\int_0^{t} \frac{2x_s^2}{(x_s^2+y_s^2)^2} \, ds = \frac{1}{2}\int_0^t \frac{2(x_s^2-y_s^2)}{(x_s^2+y_s^2)^2} \, ds + \frac{1}{2}\int_0^t \frac{2}{x_s^2+y_s^2}\, ds.
\]
It follows from the Loewner equation that $y_t$ and $v_t$ are both bounded above by $(4t + y^2)^{1/2}$, and we conclude using \eqref{nov1211.1} that 
\begin{multline*}
\int_s^{t}\Re  \psi(s) \, ds \\
\le \frac{1}{2}\left(\log|(f_t^{(1)})'(z)| + \log\left(\frac{(4t+y^2)^{1/2}}{y}\right)\right)^{1/2} \\  \times \left(\log| (f_t^{(2)})'(z)| + \log\left(\frac{(4t+y^2)^{1/2}}{y}\right)\right)^{1/2}.
\end{multline*}
We get \eqref{nov18.4.2} by combining the last estimate with \eqref{G} and noting that the Cauchy-Schwarz inequality implies that
\begin{align}\label{nov18.5}
\int_0^t |\psi(s)| \, ds & \le \left(\int_0^t\frac{2}{x_s^2+y_s^2} \, ds \right)^{1/2} \left( \int_0^t\frac{2}{u_s^2+v_s^2} \, ds \right)^{1/2} \\ \nonumber & \le \log\left(\frac{(4t+y^2)^{1/2}}{y} \right).
\end{align}
It remains to prove \eqref{nov18.4.1}. For this, note that 
\[
    \int_0^{t} e^{\int_s^{t} \Re \psi(r) \, dr}|\psi(s)| \, ds \le \int_0^{t} e^{\int_s^{t} |\psi(r)| \, dr}|\psi(s)| \, ds =e^{\int_0^t | \psi(s) | \, ds} -1.
\]
We can then estimate as in \eqref{nov18.5}. Combined with \eqref{G}, this proves \eqref{nov18.4.1} and concludes the proof.
\end{proof}

\section{Schramm-Loewner Evolution and Probabilistic Estimates}\label{SLE-sect}
Let $B_t$ be standard Brownian motion. The Schramm-Loewner evolution SLE$_\kappa$ for $\kappa \ge 0$ fixed is defined by taking $W_t=\sqrt{\kappa}B_t$ as driving term in \eqref{LPDE}. We recall that for each $\kappa \ge 0$, the SLE$_\kappa$ Loewner chain $(f_t^{(\kappa)})$ is almost surely generated by a curve, the (chordal) SLE$_\kappa$ path, $\gr^{(\gk)}$, see \cite{RS} and \cite{LSW04}. We also recall that the tip of the curve at time $t$ is defined by taking the radial limit
\[
\gamma^{(\kappa)}(t):=\lim_{y \to 0+} \hat{f}_t^{(\kappa)}(iy) ,
\]
where  $\hat{f}_t^{(\kappa)}(iy) := f_t^{(\kappa)}(\sqrt{\kappa}B_t+iy)$.
\subsection{Set-up and strategy} \label{sect-Whitney}
\begin{figure}[t]
  \includegraphics[height=60mm]{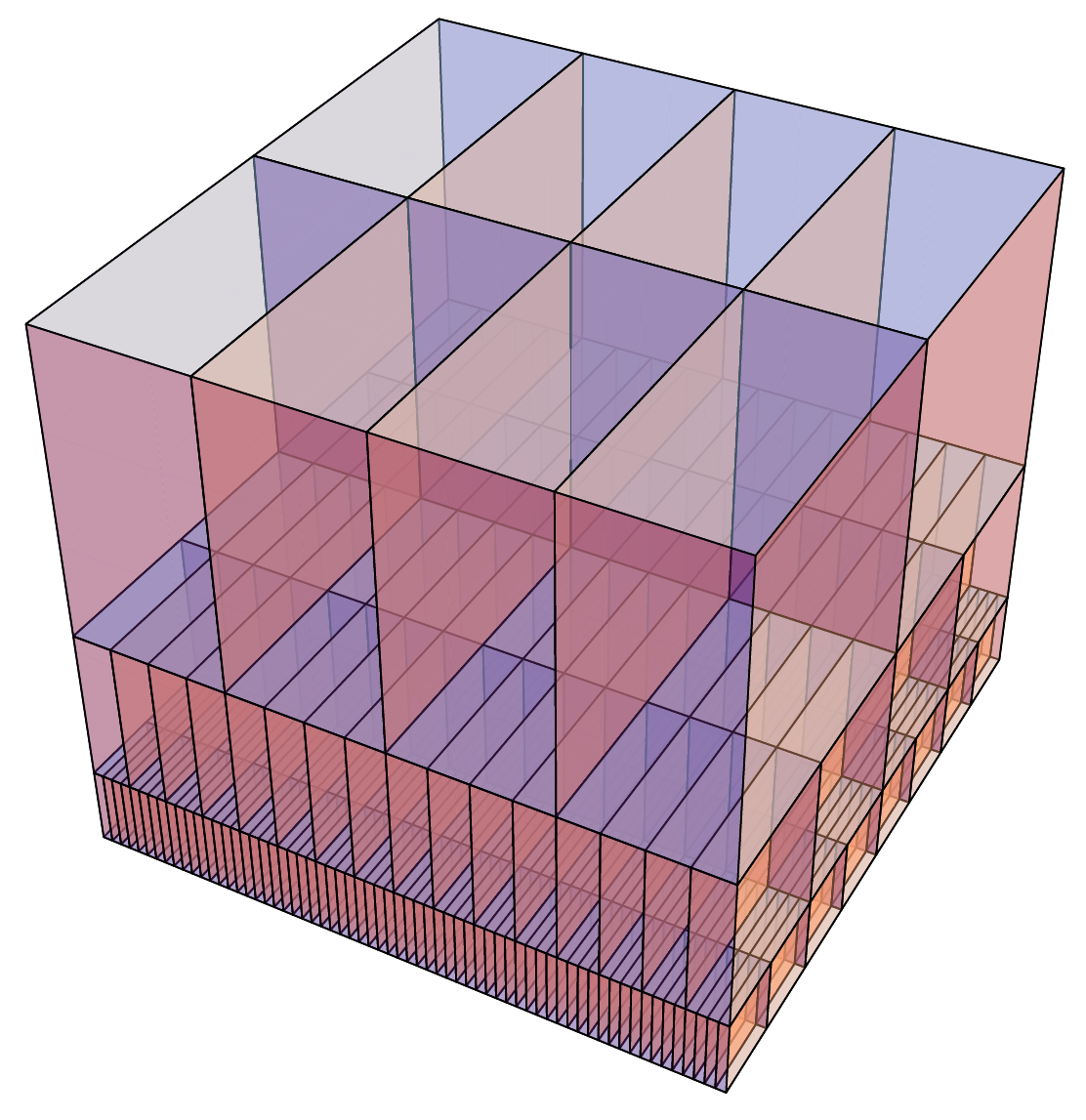}
  \caption{Sketch of the Whitney-type partition of $(t,y,\kappa)$-space. The box volumes decrease with $y$ and the ``floor'' corresponds to $y=0$. The proof controls $\left|(\hat{f}_t^{(\kappa)})'(iy)\right|$ at a corner of each box. } 
  \label{whitfig}
\end{figure}
The idea for the proof of Theorem~\ref{main-thm} is simple and so before giving the details we shall first explain the main steps of the proof and give a few definitions.  
We write 
\begin{equation}\label{june4.1}
F(t,y,\gk) = \hat{f}^{(\gk)}_t(iy), \quad F'(t,y,\gk) = \left(\hat{f}^{(\gk)}_t\right)'(iy),
\end{equation}
and restrict attention to $(t,y, \kappa) \in [0,1] \times [0,1] \times [0, \kappa_0)$.
Our main goal is to show that for almost every Brownian motion sample $B_t=B_t(\omega)$, the function $(t, \kappa) \mapsto \gamma^{(\kappa)}(t)$ defined by taking the radial limit $\lim_{y \to 0+} F(t,y,\kappa)$ is well-defined and continuous for $(t,\kappa) \in [0,1] \times [0, \kappa_0)$. (Recall the sufficient condition of Theorem~\ref{sufficient}.) This will clearly imply Theorem~\ref{main-thm}.
Our strategy is similar to that of the proof of Theorem~5.1 of \cite{RS}. We partition the $(t,y,\kappa)$-space in three-dimensional Whitney-type boxes $\wbox$ whose volumes decrease with the $y$-coordinate: Let
\[
    \wbox = \left[ \frac{j-1}{2^{2n}}, \frac{j}{2^{2n}} \right] \times \left[ \frac{1}{2^n}, \frac{1}{2^{n-1}} \right]
    \times \left[ \frac{\ell - 1}{2^{nq}}, \frac{\ell}{2^{nq}} \right],
\]
where $(n, j, \ell) \in \bbN^3$.  (See Figure~\ref{whitfig} for a sketch.) The parameter $q$ should for now be thought of as being (slightly larger than) $1$. Let
\begin{equation}\label{corner}
    \corner = \left( \frac{j}{2^{2n}}, \frac{1}{2^n}, \frac{\ell}{2^{qn}} \right) \in \wbox
\end{equation}
be the corners of the boxes. The idea is to apply a one-point moment estimate and the Chebyshev inequality to control the magnitude of $|F'|$ at the corners $\corner$ so that for suitable $\gb < 1$ and $j, \ell, n$,
\begin{equation} \label{I:BC}
    \sum_{j,\ell, n} \bbP \left( \left| F'(\corner) \right| \geq 2^{n\gb} \right) \le c \sum_n 2^{qn}2^{2n}2^{-\rho n} < \infty,
\end{equation}
where $\rho$ is the decay rate in the moment estimate we use. (The decay rate of the probabilities in \eqref{I:BC} depend on $\kappa$ and $\beta$ and we need to have $\kappa < \kappa_0$ for the series in \eqref{I:BC} to converge with $\beta < 1$. In particular we need to be able to choose $1< q <\rho-2$.) The Borel-Cantelli lemma then implies that there almost surely exists a random constant $c<\infty$ such that $\left| F'(\corner) \right| \leq c 2^{n\gb}$ for all triples $(n, j, \ell)$ in the sum. With this derivative
estimate we can then use the distortion-type bounds of Section~\ref{DLE-sect} to show that the diameters of the box images decay like a power of the $y$-coordinate, that is,
\begin{equation} \label{I:diamS}
    \diam F(\wbox) \leq c \, 2^{-n\gd},
\end{equation}
where $\gd > 0$ can be thought of as the smaller of $q-1$ and $1-\beta$. Once we have this it is easy to show that $\lim_{y \to 0+} F(t,y,\kappa)$ exists. To prove continuity we estimate 
\begin{align*}
|\gamma^{(\kappa_1)}(t_1) - \gamma^{(\kappa_2)}(t_2)|&=|F(t_1, 0+, \kappa_1) -  F(t_2, 0+, \kappa_2)|\\
& = O\left(|t_1-t_2|^{\delta/2}\right)+O\left(|\kappa_1-\kappa_2|^{\delta/q}\right)
\end{align*} by using \eqref{I:diamS} to sum the diameters of the box images along a ``hyperbolic geodesic'' in $(t,y,\kappa)$-space connecting $(t_1,0,\kappa_1)$ with $(t_2,0,\kappa_2)$. The resulting H\"older exponents depend on the particular choices of parameters ($\beta$ and $q$) and can be taken larger if we restrict attention to smaller $\kappa$. To achieve the best exponents we will prove a local version of \eqref{I:diamS}, which can then be used to patch together a global estimate with varying exponents.  

We now turn to the details.
\subsection{Probabilistic estimates}\label{sect-estimates}
Before stating the basic moment estimate that we will use we need to introduce a few parameters. For $(\gk, \gb) \in (0, \infty) \times (0,1)$, let
\begin{equation} \label{Def:parameters}
    \left\{
    \begin{aligned}
        \gl &= 1 + \frac{2}{\gk} + \frac{\gb(2+\gb) \gk}{8(1+\gb)^2}; \\
        \gz &= \frac{2}{\gk} - \frac{\gb^2 \gk}{8(1+\gb)^2}; \\
        \rho & =\lambda \beta + \zeta.         
    \end{aligned}
    \right.
\end{equation}
It will also be useful to define 
\begin{equation}\label{sigma}
\gs = \min\{ \gl \gb, \rho - 2\}.
\end{equation} 
These notations (with the exception of $\sigma$) with corresponding moment estimates to follow have appeared in, e.g., \cite{JVL} and earlier works by Lawler. We find these estimates more convenient to use and they give better H\"older exponents than those from \cite{RS}, although for technical reasons we shall use a bound from the latter reference when we consider $\kappa$ very close to and including $0$. We remark that Lind \cite{lind-holder} improved the estimates from \cite{RS} in a slightly different setup to essentially agree with the bounds we will use.

\begin{rem}Roughly speaking, the functions in \eqref{Def:parameters} are related in the following way: If $(\kappa, \beta)$ is fixed then as $y \to 0$ the integral $\mathbb{E}[|\hat{f}'_t(iy)|^\lambda]$ is supported on the event that $|\hat{f}'_t(iy)| \approx y^{-\beta}$ and this event has  probability approximately equal to $y^{\lambda \beta + \zeta}=y^\rho$, see \cite{JVL2}. 
\end{rem} 
\begin{thm}[\cite{JVL}] \label{L:momentEst}
    Suppose $(\kappa,\gb) \in (0,\infty) \times (-1,1)$. There is $c = c(\kappa,\gb) <\infty$ such that
    \[
        \bbE \left[ \left| F'\left( \frac{j}{2^{2n}}, \frac{1}{2^n}, \gk \right) \right|^{\gl} \right] \leq c\, j^{-\zeta/2}
    \]
    for all $n \in \bbN$, and $j$ = 1, 2, \dots, $2^{2n}$, where $\gl$ and $\gz$ are defined by \eqref{Def:parameters}.
\end{thm}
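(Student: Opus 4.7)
The plan is to reduce the moment estimate to a reverse-time computation and then bound it by constructing a one-parameter family of local martingales that, via Girsanov, converts the problem into moments of a Bessel-type process. By time-reversal of Brownian motion, $|F'(t,y,\kappa)|=|(\hat{f}_t^{(\kappa)})'(iy)|$ has the same law as $|h_t'(iy)|$, where $(h_s)$ solves the reverse-time Loewner ODE \eqref{rev-LODE} driven by $\sqrt{\kappa}\tilde B_s$ for an independent Brownian motion $\tilde B$. So it suffices to bound $\mathbb{E}\bigl[|h_t'(iy)|^{\lambda}\bigr]$. Writing $Z_s=X_s+iY_s=h_s(iy)-\sqrt{\kappa}\tilde B_s$ with $Z_0=iy$, one has the familiar SDEs
\[
dX_s=-\frac{2X_s}{|Z_s|^2}\,ds-\sqrt{\kappa}\,d\tilde B_s,\qquad dY_s=\frac{2Y_s}{|Z_s|^2}\,ds,\qquad d\log|h_s'(iy)|=\frac{2(X_s^2-Y_s^2)}{|Z_s|^4}\,ds.
\]

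Second, guided by the heuristic in the remark after \eqref{Def:parameters} that the expectation is supported on the event $|h_t'(iy)|\approx y^{-\beta}$ of probability $\approx y^{\rho}$, I would look for a local martingale of the form
\[
M_s\;=\;|h_s'(iy)|^{\lambda}\,Y_s^{\zeta}\,(\sin\arg Z_s)^{b},
\]
with exponents $(\lambda,\zeta,b)$ chosen so that the $ds$-drift from It\^o's formula vanishes identically. This produces a system of algebraic relations whose one-parameter family of solutions is precisely \eqref{Def:parameters}, with $b$ a specific function of $\kappa,\beta$. Normalizing, $M_s/M_0$ is a positive local martingale with $M_0=y^{\zeta}$. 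The key step is to argue $M_s/M_0$ is a true martingale up to time $t$; this is done by a standard localization (stop when $\arg Z_s$ hits $\{\eta,\pi-\eta\}$, send $\eta\to0$) combined with uniform-integrability bounds coming from the fact that $Y_s$ is non-decreasing and $|h_s'(iy)|$ has a deterministic a priori upper bound depending only on $y,t$.

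Third, apply Girsanov: under the tilted measure $d\tilde{\mathbb{P}}=(M_t/M_0)\,d\mathbb{P}$, the noise $\tilde B$ acquires a drift which turns $\arg Z_s$ into a positive-recurrent diffusion on $(0,\pi)$ and makes $Y_s$ grow like a Bessel process, so that $Y_t^2\gtrsim t$ with overwhelming probability whenever $t\geq y^2$. Rearranging the definition of $M_t$ gives
\[
\mathbb{E}\bigl[|h_t'(iy)|^{\lambda}\bigr]\;=\;y^{\zeta}\,\tilde{\mathbb{E}}\!\left[\,Y_t^{-\zeta}\,(\sin\arg Z_t)^{-b}\,\right].
\]
Ergodicity of $\arg Z_s$ under $\tilde{\mathbb{P}}$ yields a uniform bound on $\tilde{\mathbb{E}}[(\sin\arg Z_t)^{-b}]$, and the Bessel-type lower bound on $Y_t$ gives $\tilde{\mathbb{E}}[Y_t^{-\zeta}]\leq C\,t^{-\zeta/2}$. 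Hence $\mathbb{E}[|h_t'(iy)|^{\lambda}]\leq C(y^2/t)^{\zeta/2}$, and plugging in $y=2^{-n}$, $t=j/2^{2n}$ gives $y^2/t=1/j$ and the stated bound $C\,j^{-\zeta/2}$.

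The main obstacle is the martingale/Girsanov step: for certain parameter ranges $\arg Z_s$ can approach $\{0,\pi\}$, where the SDE degenerates, so one must carefully justify that $M_s/M_0$ is a true martingale (not merely local) and that the tilted measure controls the negative moment $\tilde{\mathbb{E}}[(\sin\arg Z_t)^{-b}]$ uniformly in $t$. The It\^o computation yielding \eqref{Def:parameters} is lengthy but mechanical. Once the Bessel-type lower bound on $Y_t$ under $\tilde{\mathbb{P}}$ is established, the final scaling is immediate from the relations $y=2^{-n}$ and $t=j/2^{2n}$.
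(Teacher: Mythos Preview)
The paper does not contain a proof of this statement: Theorem~\ref{L:momentEst} is quoted from \cite{JVL} and used as a black box, so there is no ``paper's own proof'' to compare against.

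That said, your sketch is in fact the strategy of \cite{JVL}: pass to the reverse flow via time-reversal of Brownian motion, build the one-parameter family of local martingales $M_s = |h_s'(iy)|^{\lambda} Y_s^{\zeta} (\sin\arg Z_s)^{b}$ by matching drifts (which is exactly what produces the relations \eqref{Def:parameters}), tilt via Girsanov so that $\arg Z_s$ becomes a positive-recurrent diffusion on $(0,\pi)$, and then read off the bound from the Bessel-type growth of $Y_t$. The final scaling with $y=2^{-n}$, $t=j\,2^{-2n}$ giving $y^2/t=1/j$ is precisely the source of the $j^{-\zeta/2}$ factor.

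Two places where the sketch would need more work to become a proof: (i) the claim that $M_s/M_0$ is a true martingale and not merely local is more delicate than a stopping argument plus the deterministic upper bound on $|h_s'|$; in \cite{JVL} this is handled by working in a time-change where $\arg Z$ becomes a radial Bessel-type process with explicit invariant density, and checking integrability there; (ii) the ``ergodicity'' step for $\tilde{\mathbb{E}}[(\sin\arg Z_t)^{-b}]$ is not automatic --- one needs that the invariant density of the tilted $\arg Z$ process dominates the transient behavior uniformly in $t\ge y^2$, and that $(\sin\theta)^{-b}$ is integrable against it, which imposes parameter constraints. These are exactly the technical issues you flag yourself, so your identification of the obstacles is accurate; just be aware that in the source they are handled not by generic localization but by explicit computation with the invariant measure in the time-changed coordinates.
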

Using Theorem~\ref{L:momentEst}, the Chebyshev inequality implies that if $(\kappa, \beta) \in (0, \infty) \times (-1,1)$ then for all $n \in \mathbb{N}$, $j=1,2,\ldots, 2^{2n}$,
\begin{equation}\label{chebyshev}
 \bbP \left\{ \left| F'\left( \frac{j}{2^{2n}}, \frac{1}{2^n}, \gk \right)\right| \geq 2^{n \beta} \right\} \leq c\, 2^{n \lambda \beta} j^{-\zeta/2},
\end{equation}
where $c = c(\gk, \beta) < \infty$. From \eqref{chebyshev}, choosing parameters appropriately, we now get the almost sure control over the derivative at the corners $\corner$ of the boxes $\wbox$ by summing and applying the Borel-Cantelli lemma. Notice that there are $O(2^{n(q+2)})$ boxes at $y$-height $2^{-n}$, where $q$ determines the mesh of the partition in the $\kappa$-direction. The ``optimal'' choice of $q$ depends on which interval of $\kappa$ we consider. It turns out that we need to have $q < \sigma$ for the Borel-Cantelli sums to converge; recall \eqref{I:BC} or see \eqref{A_n} below. On the other hand, the decay rate claimed in
\eqref{I:diamS} becomes
\begin{equation} \label{Def:delta}
    \gd = \min\left\{ q - \improve, 1 - \gb  \right\},
\end{equation}   
where
\[
\improve:=\sqrt{\frac{1+\beta}{2}}
\]
is the exponent from the distortion-type estimate \eqref{improve-est}. Thus we are led to consider $\beta>\hat{\beta}_\kappa$ where $\hat\beta_\kappa$ is a solution in $(0,1)$ to 
\begin{equation}\label{phi-sigma}
\improve=\sigma(\kappa, \beta),
\end{equation}
where $\sigma$ was defined in \eqref{sigma}. If $\beta \ge 0$, then if $\kappa > 1$ we have $\sigma = \rho -2$, while if $\kappa \le 1$, then $\sigma = \lambda \beta$. We have not found a simple expression for $\hat{\beta}_\kappa$ but we note the following properties which can be checked from \eqref{phi-sigma} and \eqref{Def:parameters}. 
\begin{lem}
A solution in $(0,1)$ to the equation \eqref{phi-sigma} exists if and only if $\kappa \in [0,\kappa_0) \cup (\kappa_\infty, \infty)$, where $\kappa_0=8(2-\sqrt{3})$ and $\kappa_\infty=8(2+\sqrt{3})$. For each such $\kappa$, call the solution $\hat{\beta}_\kappa$. Then $\hat{\beta}_\kappa$ increases continuously from $0$ to $1$ as $\kappa$ increases from $0$ to $\kappa_0$ and decreases from $1$ to $0$ as $\kappa$ increases from $\kappa_\infty$ to $\infty$. Moreover, if $\beta > \hat{\beta}_\kappa$ then $\sigma(\kappa, \beta)>\vp(\beta)$. 
\end{lem}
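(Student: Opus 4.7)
The plan is to study $g(\kappa, \beta) := \sigma(\kappa, \beta) - \vp(\beta)$ on $(0, \infty) \times [0, 1]$ via endpoint analysis in $\beta$ and a derivative comparison, then extract the properties of $\hat\beta_\kappa$ from the implicit function theorem. Direct substitution in \eqref{Def:parameters} yields $\sigma(\kappa, 0) = \min\{0, 2/\kappa - 2\} \le 0$, so $g(\kappa, 0) < 0$ for every $\kappa > 0$. At $\beta = 1$, $\lambda \beta|_{\beta=1} = 1 + 2/\kappa + 3\kappa/32 > 1 = \vp(1)$ and
\[
(\rho - 2 - 1)\big|_{\beta = 1} = \frac{\kappa^2 - 32\kappa + 64}{16\kappa} = \frac{(\kappa - \kappa_0)(\kappa - \kappa_\infty)}{16\kappa},
\]
so $g(\kappa, 1) > 0$ precisely when $\kappa \in [0, \kappa_0) \cup (\kappa_\infty, \infty)$. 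By the intermediate value theorem, a zero $\hat\beta_\kappa \in (0, 1)$ exists on exactly this $\kappa$-set.

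For uniqueness, the exclusion of zeros on $[\kappa_0, \kappa_\infty]$, and the ``moreover'' clause, I would compare $\beta$-derivatives. Differentiating \eqref{Def:parameters} shows $\partial_\beta(\lambda \beta), \partial_\beta(\rho - 2) \ge 1 + 2/\kappa$ (the remaining $\beta$-dependent contributions being non-negative), so by the mean value theorem both branches satisfy $f(\beta_2) - f(\beta_1) \ge (1 + 2/\kappa)(\beta_2 - \beta_1)$ for $\beta_2 > \beta_1$, and hence so does their minimum $\sigma(\kappa, \cdot)$. Since $\vp$ is $1/(2\sqrt{2})$-Lipschitz and $1/(2\sqrt{2}) < 1 < 1 + 2/\kappa$, the function $g(\kappa, \cdot)$ is strictly increasing on $[0, 1]$. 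This yields uniqueness of $\hat\beta_\kappa$, the assertion $\sigma(\kappa, \beta) > \vp(\beta)$ for $\beta > \hat\beta_\kappa$, and --- combined with $g(\kappa, 1) \le 0$ when $\kappa \in [\kappa_0, \kappa_\infty]$ --- forbids any zero in $(0, 1)$ on that closed interval.

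Continuity of $\kappa \mapsto \hat\beta_\kappa$ then follows from the implicit function theorem applied to $g = 0$, using $\partial_\beta g > 0$ with one-sided derivatives handling the corner where $\lambda \beta = \rho - 2$. Monotonicity comes from the sign of $\partial_\kappa \sigma$: on $[0, \kappa_0) \times (0, 1]$, a direct computation gives both $\partial_\kappa(\lambda \beta), \partial_\kappa(\rho - 2) < 0$, since $\kappa < \kappa_0 < 8/\sqrt{3}$ while the minima over $\beta \in (0, 1]$ of $4(1+\beta)/\beta$ and $4(1+\beta)/\sqrt{\beta(2+\beta)}$ equal $8$ and $8/\sqrt{3}$ respectively; hence $\hat\beta_\kappa$ is strictly increasing on $[0, \kappa_0)$. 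On $(\kappa_\infty, \infty)$, one first checks that $\sigma = \rho - 2$ at the zero (since $\zeta < 2$ in this regime), then reduces the desired $\partial_\kappa(\rho - 2) > 0$ to $\kappa > 4(1+\hat\beta)/(\vp(\hat\beta) + 2 - \hat\beta)$, whose right-hand side is strictly increasing in $\hat\beta \in [0, 1]$ with maximum $4 < \kappa_\infty$; hence $\hat\beta_\kappa$ is strictly decreasing there. The endpoint limits $\hat\beta_\kappa \to 1$ as $\kappa \to \kappa_0^-$ or $\kappa \to \kappa_\infty^+$ and $\hat\beta_\kappa \to 0$ as $\kappa \to 0^+$ or $\kappa \to \infty$ follow from continuity together with $g(\kappa_0, 1) = g(\kappa_\infty, 1) = 0$ and the divergence of $\sigma(\kappa, \beta)$ for any fixed $\beta > 0$ in the extreme regimes. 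The main technical obstacle is the Lipschitz corner of $\sigma$ together with the branch identification needed on $(\kappa_\infty, \infty)$; both are manageable via the explicit quadratic structure in $\kappa$ arising from substituting $\sigma = \vp(\beta)$ into \eqref{Def:parameters}.
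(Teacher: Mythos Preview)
The paper's own ``proof'' is a single sentence: it omits the details and only records that $\kappa_0,\kappa_\infty$ arise from solving $\sigma=1$ at $\beta=1$. Your proposal supplies the argument the paper suppresses, and the approach you take---endpoint evaluation in $\beta$, a uniform lower bound on $\partial_\beta\sigma$ beating the Lipschitz constant of $\varphi$, and implicit differentiation for the monotonicity in $\kappa$---is sound and is the natural way to carry out what the authors left to the reader.

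A few checks confirm your computations. Using $\rho-2=\beta-2+2(1+\beta)/\kappa+\beta^2\kappa/(8(1+\beta))$ one gets $\partial_\beta(\lambda\beta)\ge 1+2/\kappa$ and $\partial_\beta(\rho-2)\ge 1+2/\kappa$ on $[0,1]$, and your passage from the derivative bounds to an increment bound \emph{before} taking the minimum is exactly what is needed (the minimum of two functions with a common increment lower bound inherits that bound). For the $(\kappa_\infty,\infty)$ branch your substitution is correct: inserting the constraint $\rho-2=\varphi$ into $\partial_\kappa(\rho-2)=-2(1+\beta)/\kappa^2+\beta^2/(8(1+\beta))$ gives $\partial_\kappa(\rho-2)=(\varphi+2-\beta)/\kappa-4(1+\beta)/\kappa^2$, whence the threshold $4(1+\beta)/(\varphi(\beta)+2-\beta)$, which indeed is increasing on $[0,1]$ with maximum $4<\kappa_\infty$.

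One cosmetic slip: in the $[0,\kappa_0)$ monotonicity step you list the thresholds $4(1+\beta)/\beta$ and $4(1+\beta)/\sqrt{\beta(2+\beta)}$ alongside $\partial_\kappa(\lambda\beta)$ and $\partial_\kappa(\rho-2)$ in swapped order (the first bound belongs to $\rho-2$, the second to $\lambda\beta$). This is harmless, since the binding minimum is $8/\sqrt{3}>\kappa_0$ either way; just correct the labeling.
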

\begin{proof}
We omit the details but note that the special values $\kappa_0, \kappa_\infty$ can be found by solving $\sigma = 1$. 
\end{proof}
\begin{lem}\label{jan18.1}
    Let $\kappa \in [0,\kappa_0)$. If $\beta > \hat{\beta}_\kappa$ and $\vp(\beta) < q < \sigma(\kappa, \beta)$, then there almost surely exists a (random) constant $c = c(\kappa,\gb,q, \omega) < \infty$ such that
    \[
        \left| F'\left( \corner \right) \right| \leq c \, 2^{n \gb}
    \]
    for all $(n, j, \ell) \in \bbN^3$ with $\corner \in [0,1] \times [0,1] \times [0,\kappa]$.
\end{lem}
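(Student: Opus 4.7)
The strategy is a standard application of Chebyshev plus Borel--Cantelli, following the sketch in Section~\ref{sect-Whitney}. Set $\kappa_{n,\ell} := \ell/2^{qn}$ (the $\kappa$-coordinate of $\corner$) and let
\[
A_{n,j,\ell} = \left\{ |F'(\corner)| \geq 2^{n\beta} \right\}.
\]
The admissible triples $(n,j,\ell)\in\bbN^3$ are those with $\corner\in[0,1]^2\times[0,\kappa]$, i.e.\ $1\le j\le 2^{2n}$ and $0\le\ell\le\lfloor\kappa\,2^{qn}\rfloor$. I will show that $\sum \PP(A_{n,j,\ell})<\infty$; the Borel--Cantelli lemma then produces an almost surely finite (random) $N$ beyond which no $A_{n,j,\ell}$ occurs, and the finitely many exceptional earlier triples can be absorbed into the constant $c$.

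For a triple with $\ell\geq 1$, the corresponding SLE at the corner has positive parameter $\kappa'=\kappa_{n,\ell}\in(0,\kappa]$. Applying Theorem~\ref{L:momentEst} with parameter $\kappa'$ and invoking Chebyshev as in \eqref{chebyshev} yields
\[
\PP(A_{n,j,\ell}) \leq c(\kappa',\beta)\, 2^{-n\lambda(\kappa',\beta)\beta}\, j^{-\zeta(\kappa',\beta)/2}.
\]
The next step is to uniformize this bound over $\kappa'\in(0,\kappa]$. By continuity of $\lambda,\zeta,c$ in $\kappa'$, together with the hypothesis $\sigma(\kappa,\beta)>q$ and the fact that $\sigma(\kappa',\beta)\geq\sigma(\kappa,\beta)$ for $\kappa'\leq\kappa$ (both $\lambda\beta$ and $\rho-2$ are decreasing in $\kappa$ in the relevant range, as one sees from \eqref{Def:parameters}), I can choose constants $\bar c<\infty$ and $\bar\lambda,\bar\zeta>0$ satisfying $\min\{\bar\lambda\beta,\,\bar\lambda\beta+\bar\zeta-2\}>q$ for which
\[
\PP(A_{n,j,\ell})\leq \bar c\cdot 2^{-n\bar\lambda\beta}\cdot j^{-\bar\zeta/2}
\]
holds uniformly over all admissible triples with $\ell\geq 1$.

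Summing is then routine. There are $O(2^{qn})$ admissible values of $\ell$ at each scale, and $\sum_{j=1}^{2^{2n}}j^{-\bar\zeta/2}=O(2^{n\max(0,\,2-\bar\zeta)})$ up to a harmless logarithmic factor, so the $n$-th total contribution decays like $2^{-n(\sigma(\kappa,\beta)-q)}$ (up to logs), which is summable by our choice of $q$. For the $\ell=0$ triples the relevant SLE is the deterministic SLE$_0$, for which $f^{(0)}_t(z)=\sqrt{z^2+4t}$ is explicit and $|F'(\cdot,\cdot,0)|$ is uniformly bounded on $[0,1]^2$, so these terms are harmless.

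The main obstacle will be the uniformization in $\kappa'$, in particular controlling the constant and exponents as $\kappa'\to 0^+$. The situation is actually favorable because $\sigma(\kappa',\beta)\to\infty$ as $\kappa'\to 0$, so the decay rate only \emph{improves} for small $\kappa'$; to avoid tracking the behavior of $c(\kappa',\beta)$ near zero one may, as the authors remark in Section~\ref{sect-estimates}, substitute the cruder moment bound from \cite{RS} on a fixed neighborhood of $0$, which still provides more than enough decay for the Borel--Cantelli sum to converge on all of $[0,\kappa]$.
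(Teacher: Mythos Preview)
Your argument is correct and follows essentially the same route as the paper's proof: Chebyshev applied to the moment bound of Theorem~\ref{L:momentEst}, uniformization over $\kappa'\in(0,\kappa]$ using monotonicity of the decay rate (with the substitution of the \cite{RS} estimate near $\kappa'=0$ to control the constant), and Borel--Cantelli. Your explicit treatment of the $\ell=0$ corners via the closed form of SLE$_0$ is a nice touch that the paper handles only implicitly.
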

\begin{proof}The result is clearly true if $\kappa=0$, so let $\kappa \in (0, \kappa_0)$ be fixed and choose $\beta > \hat{\beta}_\kappa$ and
 $\vp(\beta) < q < \sigma(\kappa, \beta)$.
    For $n$ = 1, 2, \dots, let \[\mathcal{A}_n = \mathcal{A}_n(\kappa, \beta, q)\] be the event that $\left| F'(\corner) \right| \geq 2^{n \beta}$ for
   some $(j,\ell) \in \bbN^2$ with $j 2^{-2n} \in [0,1]$ and $\ell 2^{-nq} \in [0,\kappa]$. We have that
  \[
  \mathbb{P}\left( \mathcal{A}_n \right) \le \sum_{j, \ell} \bbP \left\{ \left| F'(\corner) \right| \geq 2^{n \beta} \right\},
  \]
  where the sum is over the above ranges of $j$ and $\ell$. We claim that for all $n=1,2,\ldots$, and $j, \ell$ such that $j 2^{-2n} \in [0,1]$ and $\ell 2^{-nq} \in [0, \kappa]$ we have the uniform estimate
  \begin{equation} \label{I:probEst_1}
      \bbP \left\{ \left| F'(\corner) \right| \geq 2^{n\gb} \right\} \leq c\, j^{-\zeta/2} 2^{-n \gl \gb},
  \end{equation}
  where $c = c(\kappa,\beta) < \infty$, $\zeta=\zeta(\kappa, \beta)$, and $\lambda=\lambda(\kappa, \beta)$. Indeed, this follows from the Chebyshev inequality and Theorem~\ref{L:momentEst} for $j,\ell$ such that $\ell 2^{-nq}$ is contained any fixed closed interval contained in $(0, \kappa]$. For $j,\ell$ such that $\ell 2^{-nq}$ is very close to $0$ we cannot directly quote Theorem~\ref{L:momentEst} since the multiplicative constant in the bound may \emph{\`a-priori} blow up as $\ell 2^{-nq} \to 0$. Moreover, the setup used for the proof of Theorem~\ref{L:momentEst} in \cite{JVL} is such that it would require some work to verify that the constant can be taken to depend only on the largest $\kappa$ considered. Instead, for simplicity and as this is all we need, we will use Corollary~3.5 of \cite{RS}, the proof of which can easily be seen to yield the required uniform constants. The decay rate in Corollary~3.5 of \cite{RS} is not as good as that of Theorem~\ref{L:momentEst} but is still sufficient to imply that \eqref{I:probEst_1} holds with a uniform constant whenever $\ell 2^{-nq}$ is sufficiently small compared to $\kappa$.  We conclude that we may sum (\ref{I:probEst_1}) over $j$ to obtain
   \begin{equation} \label{I:probEst_2}
      \sum_{j=1}^{2^{2n}} \bbP \left\{ \left| F'(\corner) \right| \geq 2^{n \gb} \right\} \leq c \, 2^{-n\gs}
  \end{equation}
  where $\gs=\gs(\kappa, \beta)$ is as in \eqref{sigma} and $c = c(\kappa, \gb) < \infty$. (When performing the summation over $j$ in \eqref{I:probEst_2} we have tacitly, if needed, estimated using a slightly smaller $\beta$ to ensure that $|\zeta/2 - 1|$ is bounded from below.) Summing the last bound over $\ell$ gives
  \begin{equation}\label{A_n}
      \bbP(\calA_n) \leq c\, 2^{-n (\gs - q)}.
  \end{equation}
  The last expression is summable over $n$ and so the proof is complete by the Borel-Cantelli lemma.\end{proof}
We will now apply the uniform derivative estimate of the last lemma to show that the diameters of the $F$-images of the boxes decay like a power of their (minimal, say) $y$-coordinate. Since $\sup_{t \in [0,1]} |\sqrt{\kappa + \Delta \kappa}B_t-\sqrt{\kappa}B_t|$ is of order (a random constant times) $\Delta \kappa$ for $\kappa >0$ but only of order $\sqrt{\Delta \kappa}$ at $\kappa=0$ we must consider these two cases separately.    
    \begin{lem} \label{Cor:diam_S}
    Let $\kappa \in [0,\kappa_0)$. For every $\ee > 0$ there exist $q  > 0$ and $\delta > 0$ and almost surely a (random) constant $c=c(\kappa, q, \ee, \omega) < \infty$ such that 
    \begin{equation}\label{mar12.1}
        \diam F(\wbox) \leq c\, 2^{-n\gd}
    \end{equation}
    for all $(n,j,\ell) \in \bbN^3$ with $\corner \in [0,1] \times [0,1] \times [\ee,\kappa]$. 
    
    Moreover, if $\ee>0$ is sufficiently small there exist $q'  > 0$ and $\delta' > 0$ and almost surely a constant $c=c(q', \ee, \omega) < \infty$ such that \eqref{mar12.1} holds with $\delta$ replaced by $\delta'$ for all $(n,j,\ell) \in \bbN^3$ with $\corner=\corner(q') \in [0,1] \times [0,1] \times [0,\ee]$.

\end{lem}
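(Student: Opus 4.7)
The plan is to propagate the almost sure corner derivative bound from Lemma~\ref{jan18.1} to an estimate on $\diam F(S_{n;j,\ell})$ by moving from the corner $p_{n;j,\ell} = (t_*, y_*, \kappa_*)$ to an arbitrary point $(t,y,\kappa') \in S_{n;j,\ell}$ one coordinate at a time. For changes in $t$ and $y$ at fixed $\kappa_*$, Lemma~\ref{lemma34} applies directly since $|t - t_*| \le 2^{-2n} \le y^2$ and $|y - y_*| \le 2^{-n} \le y$, so each such change is controlled by a constant multiple of $y \cdot |F'(p_{n;j,\ell})|$. With parameters $\beta > \hat{\beta}_\kappa$ and $\vp(\beta) < q < \sigma(\kappa, \beta)$ chosen as in Lemma~\ref{jan18.1} (and taking $q > 1$, compatible with $\vp(\beta) < 1$), both contributions are $O(2^{-n(1-\beta)})$.

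The main work is the $\kappa$-direction. I would decompose
\[
F(t,y,\kappa') - F(t,y,\kappa_*) = \bigl[\, f_t^{(\kappa_*)}(z_*) - f_t^{(\kappa_*)}(z')\, \bigr] + \bigl[\, f_t^{(\kappa_*)}(z') - f_t^{(\kappa')}(z')\, \bigr],
\]
where $z_* = \sqrt{\kappa_*} B_t + iy$ and $z' = \sqrt{\kappa'} B_t + iy$. The first bracket is a base-point displacement of the single map $f_t^{(\kappa_*)}$ along a horizontal segment of length $|\sqrt{\kappa_*} - \sqrt{\kappa'}| \cdot |B_t|$; for $\kappa' \in [\ee, \kappa]$ this is bounded by a random constant times $2^{-nq}$, and since $q > 1$ this length is dominated by $y \ge 2^{-n}$ for all $n$ sufficiently large, so Koebe distortion transfers the corner derivative bound to the segment and the bracket is $O(2^{-n(q - \beta)})$. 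The second bracket is a driving-term perturbation at the fixed point $z'$; the refined estimate \eqref{improve-est} with $\beta_1 = \beta$ applies once Koebe has transferred the derivative bound from $z_*$ to $z'$, and yields $O(\epsilon \cdot y^{-\vp(\beta)} \log(I_{t,y}/y)) = O(2^{-n(q - \vp(\beta))} \log n)$, where $\epsilon = |\sqrt{\kappa_*} - \sqrt{\kappa'}| \sup_{s\le 1}|B_s|$. Summing all contributions and absorbing the logarithm by taking any $\delta$ strictly less than $\min\{1 - \beta, q - \vp(\beta)\}$ yields \eqref{mar12.1}.

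For the second statement, covering $\kappa' \in [0, \ee]$, the new difficulty is that $\kappa \mapsto \sqrt{\kappa}$ is only \Holder-$1/2$ near $0$, so one only has $|\sqrt{\kappa_*} - \sqrt{\kappa'}| \le \sqrt{|\kappa_* - \kappa'|} \le 2^{-nq'/2}$. Repeating the decomposition with this weaker input, the $\kappa$-direction change becomes $O(2^{-n(q'/2 - \vp(\beta))})$, so I need $q' > 2\vp(\beta)$ (and $q' > 2$ to preserve the Koebe step for the base-point displacement). Borel--Cantelli in Lemma~\ref{jan18.1} still requires $q' < \sigma(\kappa', \beta)$ uniformly in $\kappa' \in [0, \ee]$. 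Inspection of \eqref{Def:parameters} shows that $\sigma(\kappa', \beta) \to \infty$ as $\kappa' \to 0^+$, so for $\ee$ sufficiently small one can select $q' > 2 \vee 2\vp(\beta)$ with $q' < \sigma(\kappa', \beta)$ uniformly on $[0, \ee]$, giving the decay exponent $\delta' = \min\{1 - \beta, q'/2 - \vp(\beta)\}$.

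The principal obstacle is the $\kappa$-direction argument: one must simultaneously control the shift of the base point $\sqrt{\kappa} B_t$ and the perturbation of the driving term entering \eqref{rev-LODE}, with constants that remain uniform as $\kappa$ varies over an interval and in particular through $\kappa = 0$, where $\kappa \mapsto \sqrt{\kappa}$ fails to be Lipschitz. The two-part formulation of the lemma, with different parameter pairs $(q, \delta)$ and $(q', \delta')$ and the restriction that $\ee$ be small in the second part, is precisely what accommodates this transition.
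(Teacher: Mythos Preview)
Your proposal is correct and follows essentially the same route as the paper's proof: transfer the corner derivative bound of Lemma~\ref{jan18.1} through the box via Lemma~\ref{lemma34}, Koebe distortion, and the refined perturbation estimate~\eqref{improve-est}, arriving at $\delta = \min\{1-\beta,\, q - \vp(\beta)\}$ (resp.\ $\delta' = \min\{1-\beta',\, q'/2 - \vp(\beta')\}$ near $\kappa = 0$). The paper is terser on the $\kappa$-direction step---it writes only ``\eqref{improve-est} combined with \ldots\ Koebe's distortion theorem'' without spelling out your two-bracket decomposition into base-point shift plus driving-term perturbation---but the substance and the resulting exponents are identical.
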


\begin{proof}
We will start with the first assertion. Let $\ee>0$ be given and assume that $\ee < \kappa$, since there is nothing to prove otherwise.
  Lemma~\ref{jan18.1} shows that if $\beta > \hat{\beta}_\kappa$ and $\vp(\beta) < q < \sigma(\kappa, \beta)$ then there almost surely exists a (random) constant $c = c(\kappa,\gb,q, \omega) < \infty$ such that
  \begin{equation}\label{apr17.1}
      \left| F'\left(p_{n; \, j,\ell} \right) \right| \leq c\, 2^{n \gb}
  \end{equation}
   for all the box corners $\corner  \in [0,1] \times [0,1] \times [\ee,\kappa]=:\mathcal{S}_\ee$. (Note that \eqref{apr17.1} holds also for $\ee=0$.) Consider a fixed but arbitrary dyadic box $\wbox \subset \mathcal{S}_\ee$. Let $p \in \wbox$. We will show that there exists $\delta>0$ such that 
   \begin{equation}\label{apr16.3}
   \left| F(p) - F(\corner) \right| \leq c n 2^{-n\gd}.
   \end{equation} 
   Write
   \[
       (t, y, \gk') = \corner \qquad \text{and} \qquad p = (t + \gD t, y + \gD y, \gk' + \gD \gk) \in \wbox.
   \]
   Since $\left| \gD t \right| \leq y^2$, Lemma~\ref{lemma34} and Lemma~\ref{jan18.1}
  imply that
  \begin{equation} \label{I:DFDt}
      \left| F(t+ \gD t, y, \gk') - F(t, y, \gk') \right| \leq c \, y \left| F'\left(p_{n; \, j,\ell}\right) \right| \leq c' \, 2^{-n(1-\gb)},
  \end{equation}
  where $c' = c'(\kappa,\gb,q, \omega) < \infty$ almost surely. On the other hand,
  \begin{align} \label{I:DFDy}
      \left| F(t+\Delta t, y + \gD y, \gk') - F(t+\Delta t, y, \gk') \right| & \leq c \, y \left| F'\left(p_{n; \, j,\ell} \right) \right| \\ \nonumber & \leq c \, 2^{-n(1-\gb)}
  \end{align}
  by the Koebe distortion theorem and Lemma~\ref{jan18.1} combined with \eqref{nov19.9}. Next, if $\gk'$, $\gk' + \gD \gk \in [\ee, \kappa]$ then
  \begin{equation} \label{I:DW}
      \sup_{t \in [0,1]} \left| \sqrt{\gk' + \gD \gk} B_t - \sqrt{\gk} B_t \right| \leq  c \, \gD \gk \, \sup_{t \in [0,1]}\left| B_t \right|
      \leq c' \, \gD \gk
  \end{equation}
  where $c' = c'(\ee, \omega) < \infty$ almost surely. The estimate~\eqref{improve-est} combined with Lemma~\ref{jan18.1}, Koebe's distortion theorem, and \eqref{nov19.9}, show that
  \begin{align} \label{I:DFDk}
      \left| F(t + \Delta t, y + \Delta y, \gk' + \gD \gk) - F(t + \Delta t, y + \Delta y, \gk') \right| & \leq c \, \gD \gk  \, y^{-\phi(\gb)} \log(y^{-1}) \\ \nonumber
      & \leq c \, n 2^{-n(q - \phi(\gb))}.
  \end{align}
Consequently, by  \eqref{I:DFDt}, \eqref{I:DFDy}, and \eqref{I:DFDk} we get \eqref{apr16.3} with  
\begin{equation}\label{apr17.2}
\delta = \min\{1-\beta, q-\vp(\beta)\},
\end{equation}
which is clearly strictly positive.
It remains to verify the case when $\corner  \in [0,1] \times [0,1] \times [0,\ee]$. For this, note that that all the estimates above except \eqref{I:DW} and \eqref{I:DFDk} hold in this case, too, with the assumption that $\beta' > \hat{\beta}_\ee$ and $\vp(\beta') < q' < \sigma(\ee,\beta')$. We replace \eqref{I:DW} by
  \[
      \sup_{t \in [0,1]} \left| \sqrt{\gk' + \gD \gk} B_t - \sqrt{\gk'} B_t \right| \leq \sqrt{\gD \gk} \sup_{t \in [0,1]} \left| B_t \right|
      \leq c \sqrt{\gD \gk},
  \]
  where $c=c(\omega) < \infty$ almost surely. This gives  
  \begin{equation}\label{apr17.3}
        \left| F(t + \Delta t, y + \Delta y, \gk' + \gD \gk) - F(t +\Delta t, y + \Delta y, \gk') \right| \leq c \, n 2^{-n(q'/2 - \phi(\gb'))}.
  \end{equation}
Note that for $\beta'>0$ fixed $\sigma(\ee, \beta')=O(1/\ee)$ as $\ee \to 0$. Consequently, by taking $\ee>0$ sufficiently small (using also that $\hat{\beta}_\ee$ is increasing in $\ee$) we have that $\beta'>\hat{\beta}_\ee$ and we can find $q'$ with
\[
2 \vp(\beta') < q' < \sigma(\ee, \beta').
\]
By \eqref{apr17.3} we get \eqref{apr16.3} with 
\[
\delta'= \min\{1-\beta', q'/2 - \phi(\gb')\}
\]
strictly positive. This concludes the proof.
\end{proof}

We have seen in the (proofs of the) last two results that the anomalous behavior at $t=0$ and $\kappa=0$ can decrease the decay rate $\delta$ of the box images in \eqref{mar12.1}. In the next lemma we record a quantitative statement which restricts attention to $t,\kappa \ge \ee >0$ and therefore gives better exponents. In this case we can replace the requirement that $\beta > \hat\beta_\kappa$ by $\beta > \beta_\kappa$, where $\beta_\kappa$ is the larger solution to
\[
\varphi(\beta)=\rho(\beta,\kappa)-2.
\] 
Indeed, in this case, the sum \eqref{I:probEst_2} is bounded by an $\ee$-dependent constant times $2^{-n(\rho-2)}$ and not only by $2^{-n\sigma}$ and this implies that we may consider the larger range of $\beta$. (Since $\kappa \ge \ee$ we do not need to use the estimate from \cite{RS}.) We note that $\beta_\kappa$ may be negative. 
\begin{lem} \label{quant-lemma}
    Let $\ee>0$ and let $\kappa \in [\ee,\kappa_0)$. If $\beta > \beta_{\kappa}$ and $\vp(\beta) < q < \rho(\kappa, \beta)-2$ then there almost surely exists a (random) constant $c=c(\ee, \kappa, \beta, q, \omega)<\infty$ such that such that for all $(n,j,\ell) \in \bbN^3$ with $\corner \in [\ee,1] \times [0,1] \times [\ee,\kappa]$,
    \[
        \diam F(\wbox) \leq c \, n 2^{-n\gd},
    \]
     where
    \[
        \gd = \min \left\{1 - \gb, q-\vp(\beta) \right\}.
    \] 
\end{lem}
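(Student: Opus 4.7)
The plan is to mimic the proof of Lemma~\ref{Cor:diam_S}, with one key improvement: the restriction $t,\kappa \ge \varepsilon$ lets us replace the exponent $\sigma(\kappa,\beta)$ by $\rho(\kappa,\beta)-2$ in the Borel--Cantelli sum, which in turn allows the weaker assumption $\beta > \beta_\kappa$ on $\beta$. I would first establish the uniform derivative bound analogous to Lemma~\ref{jan18.1}: fix $\beta > \beta_\kappa$ and $\varphi(\beta) < q < \rho - 2$, and since $\kappa \ge \varepsilon > 0$, Theorem~\ref{L:momentEst} applies directly with constants depending only on $\varepsilon$ and $\kappa$ (no appeal to Corollary~3.5 of \cite{RS} is needed). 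Chebyshev gives
\[
\bbP\bigl\{|F'(\corner)|\ge 2^{n\beta}\bigr\} \le c\, j^{-\zeta/2}\, 2^{-n\lambda\beta}.
\]
Because $j/2^{2n} \in [\varepsilon,1]$, one has $j^{-\zeta/2} \le c(\varepsilon)\, 2^{-n\zeta}$ uniformly, so summing over the $O(2^{2n})$ admissible $j$ and the $O(2^{nq})$ admissible $\ell$ yields
\[
\sum_{j,\ell}\bbP\bigl\{|F'(\corner)|\ge 2^{n\beta}\bigr\} \le c(\varepsilon,\kappa)\, 2^{-n(\rho-2-q)},
\]
which is summable in $n$ since $q < \rho-2$. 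Borel--Cantelli then gives a random $c<\infty$ with $|F'(\corner)| \le c\, 2^{n\beta}$ uniformly over the corners in $[\varepsilon,1]\times[0,1]\times[\varepsilon,\kappa]$.

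Next, for $p \in \wbox$ I would decompose $F(p)-F(\corner)$ into increments in $t$, $y$, and $\kappa$ and bound each as in \eqref{I:DFDt}--\eqref{I:DFDk}. Lemma~\ref{lemma34} together with the Koebe distortion theorem and the derivative bound above gives both the $t$- and $y$-increments the estimate $O(2^{-n(1-\beta)})$. For the $\kappa$-increment, the restriction $\kappa \ge \varepsilon$ makes the driving-term perturbation linear in $\Delta\kappa$:
\[
\sup_{t\in[0,1]} |\sqrt{\kappa'+\Delta\kappa}\,B_t - \sqrt{\kappa'}\,B_t| \le c(\varepsilon,\omega)\,\Delta\kappa,
\]
so \eqref{improve-est} combined with the derivative control gives an estimate of order $n\,2^{-n(q-\varphi(\beta))}$. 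Taking the minimum of the three exponents produces $\delta = \min\{1-\beta,\,q-\varphi(\beta)\}$, and the logarithmic factor $n$ survives from \eqref{improve-est}.

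The one conceptual point worth isolating is the replacement of $\sigma$ by $\rho-2$. In the global lemma, the sum over $j$ must include $j=1$, where $j^{-\zeta/2}$ is of order $1$; this forces the effective probabilistic decay rate to be $\min\{\lambda\beta,\rho-2\}=\sigma$, hence the stronger requirement $\beta > \hat\beta_\kappa$. Once $j \gtrsim \varepsilon\, 2^{2n}$, every surviving term carries the small factor $j^{-\zeta/2} \asymp 2^{-n\zeta}$, and summing the $O(2^{2n})$ admissible $j$ leaves only the exponent $\rho-2$. Consequently the admissible range of $\beta$ widens to $\beta>\beta_\kappa$, where $\beta_\kappa$ is the larger root of $\varphi(\beta)=\rho(\beta,\kappa)-2$; no further analytic obstacle arises, and all remaining estimates transfer verbatim from the proof of Lemma~\ref{Cor:diam_S}.
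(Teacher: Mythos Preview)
Your proposal is correct and matches the paper's approach exactly: the paper does not spell out a separate proof for this lemma but indicates in the preceding paragraph precisely the two modifications you make---using Theorem~\ref{L:momentEst} directly (since $\kappa\ge\varepsilon$) and exploiting $j\ge\varepsilon\,2^{2n}$ to upgrade the summed decay rate from $\sigma$ to $\rho-2$---after which the diameter estimate follows verbatim from the argument of Lemma~\ref{Cor:diam_S}. Your explanation of why the restriction $t\ge\varepsilon$ widens the admissible range of $\beta$ from $\hat\beta_\kappa$ to $\beta_\kappa$ is also the one the paper gives.
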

\section{H\"older Regularity and Proof of Theorem~\ref{main-thm}}\label{proof-sect}
Let us now give a precise statement of Theorem~\ref{main-thm}. Let $(\Omega, \mathbb{P})$ be a probability space supporting a standard linear Brownian motion $B$. For $\omega \in \Omega$ we write $(B_t(\omega), \, t \ge 0)$ for the sample path of $B$. Let $\mathcal{K}$ be the space of continuous curves defined on $[0,1]$ taking values in the closed upper half plane $\overline{\mathbb{H}}=\{z: \Im z \ge 0 \}$. We endow $\mathcal{K}$ with the supremum norm so that it becomes a metric space. 
\begin{thm}\label{main-thm2}
There exists an event $\Omega_* \subset \Omega$ of probability $1$ for which the following holds for every $\go \in \gO_*$. The chordal SLE$_\kappa$ path $(\gr^{(\gk)}(t, \go), \,  t \in [0, 1])$ driven by $(\sqrt{\kappa}B_t(\omega), \, t \in [0,1])$ and parameterized by capacity exists as an element of $\mathcal{K}$ for every $\kappa \in [0, \kappa_0)$, where $\kappa_0=8(2-\sqrt{3})$. Moreover, $\kappa \mapsto \gr^{(\gk)}(\cdot, \go)$ is continuous as a function from $[0, \kappa_0)$ to $(\calK,\Vert \cdot \Vert_{\infty})$.
\end{thm}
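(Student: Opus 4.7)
The plan is to leverage the Whitney-box diameter estimates of Lemma~\ref{Cor:diam_S} to assemble a single full-measure event $\Omega_*$ on which the radial limit $\gamma^{(\kappa)}(t) := \lim_{y \to 0+} F(t, y, \kappa)$ exists and depends jointly continuously on $(t, \kappa) \in [0,1] \times [0, \kappa_0)$. For each fixed $\omega \in \Omega_*$ and fixed $\kappa$, Theorem~\ref{sufficient} will then identify $\gamma^{(\kappa)}(\cdot, \omega)$ as the curve generating the SLE$_\kappa$ Loewner chain, and joint continuity will deliver continuity of $\kappa \mapsto \gamma^{(\kappa)}(\cdot, \omega)$ as a $(\mathcal{K}, \Vert \cdot \Vert_\infty)$-valued map.

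First, I would take sequences $\ee_m \downarrow 0$ and $\kappa_m \uparrow \kappa_0$ with $\ee_m$ small enough for the second assertion of Lemma~\ref{Cor:diam_S} to apply on $[0,1] \times [0,1] \times [0, \ee_m]$. Applying the first assertion of the lemma on each slab $[0,1] \times [0,1] \times [\ee_m, \kappa_m]$ together with the second on the boundary slab, and intersecting the countably many probability-$1$ events so produced, yields $\Omega_*$. On $\Omega_*$ one has a diameter bound
\[
\diam F(\wbox) \leq c\, 2^{-n \delta}
\]
(with $c, \delta > 0$ depending on which slab $\wbox$ lies in) holding simultaneously for every Whitney box $\wbox \subset [0,1] \times [0,1] \times [0, \kappa_0)$.

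Next, existence of the radial limit at $(t, \kappa)$ follows by telescoping along the nested sequence of boxes containing $(t, 2^{-n}, \kappa)$, since $\sum_n n\,2^{-n\delta} < \infty$ forces $F(t, 2^{-n}, \kappa)$ to be Cauchy. For joint continuity, given $(t_1, \kappa_1)$ and $(t_2, \kappa_2)$, I would pick $n_*$ with $2^{-n_*} \asymp \max\bigl\{ |t_1 - t_2|^{1/2},\, |\kappa_1 - \kappa_2|^{1/q} \bigr\}$ and connect $(t_1, 0, \kappa_1)$ to $(t_2, 0, \kappa_2)$ through $(t, y, \kappa)$-space by a path that climbs from $y=0$ up to $y = 2^{-n_*}$ above $(t_1, \kappa_1)$, crosses horizontally through $O(1)$ boxes at height $2^{-n_*}$, and descends to $y=0$ above $(t_2, \kappa_2)$. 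Summing the diameter bounds along this ``hyperbolic geodesic'' gives
\[
|\gamma^{(\kappa_1)}(t_1) - \gamma^{(\kappa_2)}(t_2)| \leq c\bigl( |t_1 - t_2|^{\delta/2} + |\kappa_1 - \kappa_2|^{\delta/q} \bigr),
\]
which is the sought joint H\"older-type modulus. Finally, Theorem~\ref{sufficient} converts continuity in $t$ of $t \mapsto \gamma^{(\kappa)}(t, \omega)$ into generation of the SLE$_\kappa$ chain by the curve $\gamma^{(\kappa)}(\cdot, \omega)$.

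The main obstacle is the degeneracy at $\kappa = 0$: the driving-term perturbation satisfies $\sup_{t \in [0,1]} |\sqrt{\kappa + \Delta\kappa}\, B_t - \sqrt{\kappa}\, B_t| = O(\sqrt{\Delta \kappa})$ rather than $O(\Delta \kappa)$ there, which forces the use of the smaller exponent $\delta' = \min\{1 - \beta', q'/2 - \vp(\beta')\}$ and shrinks the admissible range of $q'$. This is exactly why the boundary slab $[0,1] \times [0,1] \times [0, \ee]$ is handled separately in Lemma~\ref{Cor:diam_S}, and continuity at $\kappa = 0$ is recovered at the cost of a weaker H\"older exponent. A secondary technical point, already subsumed by the Borel--Cantelli sum \eqref{A_n}, is that the derivative bounds of Lemma~\ref{jan18.1} must apply uniformly in $\ell$ within each slab; this is why only a single countable family of events needs to be intersected to produce $\Omega_*$.
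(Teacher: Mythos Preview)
Your proposal is correct and follows essentially the same approach as the paper: both use the Whitney-box diameter bounds of Lemma~\ref{Cor:diam_S} to telescope for existence of the radial limit, then run the ``hyperbolic geodesic'' argument (climb up to height $2^{-N}$, cross through $O(1)$ boxes, descend) to obtain the joint H\"older estimate, and finally take a countable intersection over an exhaustion of $[0,\kappa_0)$ to produce $\Omega_*$. The only cosmetic differences are that the paper estimates the $t$- and $\kappa$-increments with two separate stopping times $N$ rather than a single $n_*$, and builds $\Omega_*=\bigcap_n\Omega_{\kappa_0-1/n}$ directly (leaving the small-$\kappa$ slab handled inside each $\Omega_\kappa$) rather than intersecting over both $\ee_m\downarrow 0$ and $\kappa_m\uparrow\kappa_0$.
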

\begin{proof}
Let $\kappa \in [0,\kappa_0)$.
We first show that almost surely, for all $(t,\kappa')\in [0,1]\times[0,\kappa]$, $\gr^{(\gk')}(t) = \lim_{y \to 0+} F(t,y,\gk')$ exists; $F$ was defined in \eqref{june4.1}. 
Suppose that $0 < y_1$, $y_2 < 2^{-N}$. The triangle inequality and Lemma~\ref{Cor:diam_S} imply that there is an event $\Omega_\kappa$ of probability $1$ on which there exist a constant $c<\infty$ and $\delta>0$ such that for all $(t, \kappa') \in [0,1]\times[0,\kappa]$,
\[
    \left| F(t, y_1, \gk') - F(t, y_2, \gk') \right| \leq \sum_{n=N}^{\infty} \diam F(Q_n) \leq c \sum_{n=N}^{\infty} 2^{-n \gd},
\]
where for each $n \in \bbN$, $Q_n=Q_n(t,\kappa') \subset \{\wbox\}$ is a Whitney-type box such that $(t, 2^{-n}, \gk') \in Q_n$. (Note that we when we apply Lemma~\ref{Cor:diam_S} we consider separately the two cases when $\kappa'$ is very small and when it is bounded away from $0$ and the Whitney-type partition depends on which of the two cases we apply.) As $N \to \infty$ the right-hand side of the last display converges to $0$ and it follows that $\gamma^{(\kappa)}(t)=\lim_{y \to 0+} F(t,y, \kappa')$ exists for all $(t, \kappa') \in [0,1] \times [0,\kappa]$ on the event $\Omega_\kappa$. Next, we wish to prove that on the event $\Omega_\kappa$, $(t, \kappa) \mapsto \gamma^{(\kappa)}(t)$ is continuous for $(t,\kappa) \in [0,1] \times [0, \kappa]$. For this, let $t_1, t_2 \in [0,1]$ be given. Define the ``stopping time'' $N \in \bbN$ by
\begin{equation} \label{Def:stopping}
    2^{-2N} < \left| t_1 - t_2 \right| \leq 2^{-2(N-1)}.
\end{equation}
(We can assume that $t_1 \neq t_2$.) Note that by the construction of the Whitney-type partition $N=O(-\log |t_1-t_2|^{1/2})$. Using Lemma~\ref{Cor:diam_S} we get, for sufficiently small $y > 0$,
\[
    \begin{aligned}
        \left| F( t_1, y, \gk') - F(t_2, y, \gk') \right|  &\leq \left| F( t_1, y, \gk') - F( t_1, 2^{-N}, \gk') \right|  \\
                                                                                 &\quad + \left| F( t_1, 2^{-N}, \gk') - F( t_2, 2^{-N} , \gk') \right| \\
                                                                              &\quad +\left| F( t_2, 2^{-N}, \gk') - F( t_2,  y , \gk') \right|  \\
        &\leq c \sum_{n=N}^{\infty}  2^{-n \gd} \\
        &\leq c 2^{- N\delta} \\
        &\leq c \left| t_1 - t_2 \right|^{\delta/2}.
  \end{aligned}
\]
We have again tacitly, if needed, considered the two separate cases of Lemma~\ref{Cor:diam_S} and we understand $\delta$ as the smaller of the two exponents obtained. Notice also that the two points $(t_1, 2^{-N}, \kappa')$ and $(t_2, 2^{-N}, \kappa')$ may not be in the same level-$N$ Whitney-type box, so we cannot, strictly speaking, apply Lemma~\ref{Cor:diam_S} to estimate $\left| F( t_1, 2^{-N}, \gk') - F( t_2, 2^{-N} , \gk') \right|$. However, it is clear that the points are contained in a translate of such a box or we can estimate directly as in \eqref{I:DFDt}. A similar remark applies when we estimate $\left| F( t, 2^{-N}, \gk_1) - F( t, 2^{-N} , \gk_2) \right|$ below.  
 
Now, if $\kappa_1, \kappa_2 \in [0,\kappa]$, we use the stopping time $N=O(-\log |\kappa_1-\kappa_2|^{1/q})$ given by
\[
    2^{-qN} < \left| \gk_1 - \gk_2 \right| \leq  2^{- q(N-1)}
\]
instead of \eqref{Def:stopping}. We get
\[
    \begin{aligned}
        \left| F( t, y, \gk_1) - F(t, y, \gk_2) \right|  &\leq \left| F( t, y, \kappa_1) - F( t, 2^{-N}, \gk_1) \right|  \\
                                                                                 &\quad + \left| F( t, 2^{-N}, \gk_1) - F( t, 2^{-N} , \gk_2) \right| \\
                                                                              &\quad +\left| F( t, 2^{-N}, \gk_2) - F( t,  y , \gk_2) \right|  \\
        &\leq c \sum_{n=N}^{\infty}  2^{-n \gd} \\
        &\leq c 2^{- N\delta} \\
        &\leq c \left| \kappa_1 - \kappa_2 \right|^{\delta/q}.
  \end{aligned}
\]
Letting $y \to 0+$ we get that
\[
|\gamma^{(\kappa_1)}(t_1) - \gamma^{(\kappa_2)}(t_2)| \le c|t_1 - t_2|^{\delta/2} + c |\kappa_1 - \kappa_2|^{\delta/q}
\]
holds on the event $\Omega_\kappa$. We may take $\Omega_*=\cap_n\Omega_{\kappa_0-1/n}$ and the proof is complete.

\end{proof}
\subsection{Quantitative Estimates}
We can see from the proof of Theorem~\ref{main-thm2} that we get H\"older continuity in both $t$ and $\kappa$ on any compact subinterval of $[0,\kappa_0)$. We will now state separately a quantitative version of the main result. For simplicity we shall only consider $t, \kappa \ge \ee > 0$, but all cases can be treated with similar arguments.

Recall the definition of $\beta_\kappa$ at the end of Section~\ref{sect-estimates}. We define the following local H\"older exponents. Let
\[
\delta(\beta,q)=\min\{1-\beta, q-\vp(\beta)\}
\]
and then let
\[
    \ga_{\kappa} = \sup_{(\beta, q)} \frac{\delta}{2} \qquad \text{and} \qquad \gn_\kappa = \sup_{(\gb,q)} \frac{\gd}{q},
\]
where the suprema are taken over $\beta>\beta_\kappa$ and $\vp(\beta) < q < \rho(\kappa, \beta)-2$. To find the value of $\ga_{\gk}$, we take $q = \rho(\kappa, \beta) - 2$ and let $\gb$ be the larger solution (for $\beta$) to
\begin{equation} \label{E:alpha}
    1 - \gb = \rho(\kappa, \beta) - 2 - \gvp(\gb).
\end{equation}
We have not found an explicit solution to this equation. However, if we replace $\gvp(\gb)$ by the majorant $\beta/4+3/4$ in \eqref{E:alpha}, then we get a second order polynomial equation in $\beta$ that we can solve to obtain the larger solution $\beta_\kappa'$ below which gives an upper bound for the larger solution to \eqref{E:alpha} and so a lower bound for $\alpha_\kappa$. We have that
\begin{equation}\label{beta-est}
\beta_\kappa'=\frac{-16+\kappa  \left(8+\sqrt{468+30 \kappa }\right)}{16+14 \kappa +\kappa ^2}, \quad \kappa \in [0, \kappa_0),
\end{equation}
which implies the estimate
\[
    \ga_{\gk} \geq \frac{1-\beta'_\kappa}{2} = \frac{1}{2}\left(1 - \frac{-16+\kappa  \left(8+\sqrt{468+30 \kappa }\right)}{16+14 \kappa +\kappa ^2}\right).
\]
As $\gk$ increases from $0$ to $\gk_0 = 8(2-\sqrt{3})$, this lower bound decreases from $1$ to $0$.

In order to estimate $\eta_\kappa$, we fix $\kappa \in [0,\kappa_0)$ and note that for each fixed  $\gb \in (\gb_{\gk},1)$, the function
\[
    q \mapsto \frac{\gd}{q} = \frac{\min\{ 1-\gb, q - \gvp(\gb) \}}{q}
\]
is increasing for $q \in (\gvp(\gb), q_*]$ and is decreasing for $q \in [q_*,\infty)$, where \[q_* = 1 - \gb + \gvp(\gb).\] Recall that we only may take  $q \in (\gvp(\gb), \rho - 2)$ so the maximum occurs
either at $q = q_*$ or $q = \rho- 2$. In fact,
\begin{equation}\label{ca}
    \sup_{q \in (\vp(\beta), \rho - 2)} \frac{\gd}{q} =
    \begin{cases}
        \frac{1-\gb}{1 - \gb + \gvp(\gb)} & \text{ if } q_* \leq \rho - 2 \\
        \frac{\rho - 2 - \gvp(\gb)}{\rho - 2} & \text{ if }  q_* \ge \rho - 2
    \end{cases}
\end{equation}
We now plug in $\beta=\beta'_\kappa$ from \eqref{beta-est} and note that by the definition of $\beta'_\kappa$ it holds that
\[
1-\beta'_\kappa + \vp(\beta'_\kappa) \le 1-\beta'_\kappa + \beta'_\kappa/4+3/4 = \rho(\beta'_\kappa)-2.
\]
Thus with this choice of $\beta$ we see from \eqref{ca} that
\[
\eta_\kappa \ge \frac{1-\gb'_\kappa}{1 - \gb'_\kappa + \gvp(\gb'_\kappa)}.
\]
Again, as $\gk$ increases from $0$ to $\gk_0 = 8(2-\sqrt{3})$, the lower bound decreases from $1$ to $0$.
\begin{thm}\label{main-thm3}
Let $\ee>0$ and $\kappa < \kappa_0$ be given and let $\alpha < \alpha_\kappa$ and $\eta < \eta_\kappa$. There almost surely exists a (random) constant $c=c(\ee, \kappa, \alpha, \eta, \omega)<\infty$ such that for all $(t_j, \kappa_j) \in [\ee,1]\times[\ee,\kappa], \, j=1,2,$
\[
\left| \gamma^{(\kappa_1)}(t_1) - \gamma^{(\kappa_2)}(t_2) \right| \le c |t_1-t_2|^{\alpha} +  c |\kappa_1-\kappa_2|^{\eta}.
\]
\end{thm}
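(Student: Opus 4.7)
The plan is to follow the template of the proof of Theorem~\ref{main-thm2} essentially verbatim, replacing Lemma~\ref{Cor:diam_S} with the sharper Lemma~\ref{quant-lemma} at every step. The restriction $t,\kappa \ge \ee > 0$ is precisely what removes the anomalous behaviour at $t=0$ and $\gk=0$ and licenses use of Lemma~\ref{quant-lemma}, in which the requirement $\gb > \hat{\gb}_\gk$ is weakened to $\gb > \gb_\gk$ and $q$ is allowed to range up to $\rho(\gk,\gb)-2$ rather than merely up to $\sigma(\gk,\gb)$. This is what accounts for the improved Hölder exponents $\alpha_\gk, \eta_\gk$.

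Given $\alpha < \alpha_\gk$ and $\eta < \eta_\gk$, first fix $\alpha' \in (\alpha, \alpha_\gk)$ and $\eta' \in (\eta, \eta_\gk)$. By definition of the suprema $\alpha_\gk, \eta_\gk$, pick admissible pairs $(\gb_1, q_1)$ and $(\gb_2, q_2)$ (i.e., with $\gb_i > \gb_\gk$ and $\vp(\gb_i) < q_i < \rho(\gk, \gb_i)-2$) such that $\gd_1 := \gd(\gb_1, q_1)$ satisfies $\gd_1/2 > \alpha'$ and $\gd_2 := \gd(\gb_2, q_2)$ satisfies $\gd_2/q_2 > \eta'$. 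Applying Lemma~\ref{quant-lemma} with each pair separately, on the intersection (still of full measure) of the two resulting probability-one events we obtain random constants and two Whitney-type partitions, such that for every box $\wbox$ in the $i$-th partition with $\corner \in [\ee,1] \times [0,1] \times [\ee,\gk]$,
\[
    \diam F(\wbox) \le c_i \, n \, 2^{-n \gd_i}, \qquad i = 1,2.
\]

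Now repeat the telescoping-sum step from the proof of Theorem~\ref{main-thm2}. For $t_1,t_2 \in [\ee,1]$ and a fixed $\gk' \in [\ee,\gk]$, define the stopping level $N$ by $2^{-2N} < |t_1-t_2| \le 2^{-2(N-1)}$ and chain through a common height $y = 2^{-N}$ using the $(\gb_1,q_1)$-partition; the same three-term split used in Theorem~\ref{main-thm2} gives, for all sufficiently small $y>0$,
\[
    |F(t_1, y, \gk') - F(t_2, y, \gk')| \le c \sum_{n=N}^{\infty} n \, 2^{-n\gd_1} \le c' N \, 2^{-N\gd_1} \le c'' |t_1 - t_2|^{\gd_1/2} \log \tfrac{1}{|t_1-t_2|}.
\]
Since $\gd_1/2 > \alpha' > \alpha$ strictly, the logarithmic factor is absorbed by enlarging the constant, yielding $c|t_1-t_2|^\alpha$ on letting $y \to 0+$. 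An analogous chaining in the $\gk$-direction using the $(\gb_2, q_2)$-partition and stopping level $2^{-q_2 N} < |\gk_1-\gk_2| \le 2^{-q_2(N-1)}$ yields $c|\gk_1-\gk_2|^\eta$. The triangle-inequality split
\[
    |\gr^{(\gk_1)}(t_1) - \gr^{(\gk_2)}(t_2)| \le |\gr^{(\gk_1)}(t_1) - \gr^{(\gk_1)}(t_2)| + |\gr^{(\gk_1)}(t_2) - \gr^{(\gk_2)}(t_2)|
\]
then closes the argument, with the intermediate pair $(t_2, \gk_1) \in [\ee,1] \times [\ee, \gk]$ so that both summands fall under the estimates above.

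The technical content of this proof is genuinely minimal beyond Theorem~\ref{main-thm2}: the only real subtlety is the logarithmic correction coming from the factor $n$ in Lemma~\ref{quant-lemma}, which is precisely why the bound is stated with strict inequalities $\alpha < \alpha_\gk$, $\eta < \eta_\gk$. The computation of the lower bounds on $\alpha_\gk$ and $\eta_\gk$ themselves (in terms of the explicit $\gb'_\gk$ of \eqref{beta-est}) is carried out in the paragraphs preceding the theorem statement and is a pure calculus exercise; it does not re-enter the proof. As in the proof of Theorem~\ref{main-thm2}, the minor point that the two level-$N$ points may fail to lie in a single Whitney box is handled either by treating the intermediate step $|F(t_1, 2^{-N}, \gk') - F(t_2, 2^{-N}, \gk')|$ directly with the bound \eqref{I:DFDt}, or by noting they lie in a translate of such a box.
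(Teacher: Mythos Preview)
Your proposal is correct and follows essentially the same approach as the paper's proof, which simply states that one argues exactly as in Theorem~\ref{main-thm2} using Lemma~\ref{quant-lemma} in place of Lemma~\ref{Cor:diam_S}. Your write-up is in fact more careful than the paper's one-line proof: you explicitly allow two different admissible pairs $(\gb_1,q_1)$ and $(\gb_2,q_2)$ so that the $t$- and $\gk$-exponents can be optimized independently (the suprema defining $\alpha_\gk$ and $\eta_\gk$ need not be approached at the same $(\gb,q)$), and you explicitly absorb the factor $n$ from Lemma~\ref{quant-lemma} using the strict inequalities $\alpha<\alpha_\gk$, $\eta<\eta_\gk$.
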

\begin{proof}
This is proved in exactly the same way as Theorem~\ref{main-thm2} but using Lemma~\ref{quant-lemma} instead of Lemma~\ref{Cor:diam_S}.
\end{proof}
By an approximation argument we obtain the following corollary. 
\begin{cor}
Let $\ee, \ee' > 0$ be given. There almost surely exists a (random) constant $c=c(\ee, \ee', \omega) < \infty$ such that for all $t_1,t_2 \in [\ee,1]$ and $\kappa, \kappa_1 \in [\ee,\kappa_0)$ with $\kappa_1 \le \kappa$,
\[
\left| \gamma^{(\kappa_1)}(t_1) - \gamma^{(\kappa)}(t_2) \right| \le c |t_1-t_2|^{\alpha_{\kappa}-\ee'} +  c |\kappa-\kappa_1|^{\eta_{\kappa}-\ee'}.
\]
\end{cor}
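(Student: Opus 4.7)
The plan is to derive the corollary from Theorem~\ref{main-thm3} via a finite-partition approximation of a compact ``bulk'' sub-interval of $[\ee, \kappa_0)$, combined with a trivial estimate when $\kappa$ is close to $\kappa_0$. Two preliminary facts drive the argument. First, the explicit lower bounds derived after \eqref{beta-est} and \eqref{ca} show that $\kappa \mapsto \alpha_\kappa$ and $\kappa \mapsto \eta_\kappa$ are continuous on $[\ee, \kappa_0)$ and decrease to $0$ as $\kappa \uparrow \kappa_0$. Second, since $\sqrt{\kappa_0}\sup_{s \in [0,1]}|B_s|$ is a.s.\ finite and the Loewner hull $K_t$ for $t \le 1$ has diameter bounded by a universal function of $\sup_{s \in [0,t]}|W_s|$ and $\sqrt{t}$, there is an a.s.\ finite $M(\omega)$ with $|\gamma^{(\kappa)}(t)| \le M$ for every $(\kappa,t) \in [0, \kappa_0) \times [0,1]$.

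First I would choose $\kappa^* < \kappa_0$ so that $\alpha_\kappa, \eta_\kappa < \ee'/2$ for all $\kappa \in [\kappa^*, \kappa_0)$, and then by uniform continuity of the exponents on $[\ee, \kappa^*]$ select a finite partition $\ee = \kappa^{(0)} < \cdots < \kappa^{(M)} = \kappa^*$ such that
\[
\alpha_{\kappa^{(m)}} \ge \alpha_\kappa - \ee'/2 \quad \text{and} \quad \eta_{\kappa^{(m)}} \ge \eta_\kappa - \ee'/2 \quad \text{for all } \kappa \in [\kappa^{(m-1)}, \kappa^{(m)}].
\]
For each $m$ I apply Theorem~\ref{main-thm3} with $\kappa$ replaced by $\kappa^{(m)}$ and with exponents $\alpha_{\kappa^{(m)}} - \ee'/2$, $\eta_{\kappa^{(m)}} - \ee'/2$, obtaining a full-probability event $\Omega_m$ carrying a random constant $c_m(\omega)$. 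Set $\Omega_* := \bigcap_{m=1}^{M} \Omega_m$ and $c := \max\{c_1, \ldots, c_M, 2M\}$; this is the candidate constant.

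On $\Omega_*$, take any admissible $(t_1, t_2, \kappa_1, \kappa)$. If $\kappa \le \kappa^*$, pick the unique $m$ with $\kappa \in [\kappa^{(m-1)}, \kappa^{(m)}]$; since $\kappa_1 \le \kappa \le \kappa^{(m)}$, Theorem~\ref{main-thm3} yields a bound whose exponents dominate $\alpha_\kappa - \ee'$ and $\eta_\kappa - \ee'$. Because $|t_1-t_2| \le 1$ and $|\kappa - \kappa_1| \le \kappa_0$ are bounded, converting the exponents to the target ones costs at most a multiplicative factor that is absorbed into $c$. If $\kappa \in [\kappa^*, \kappa_0)$, the target exponents $\alpha_\kappa - \ee', \eta_\kappa - \ee'$ are both negative, so $|t_1-t_2|^{\alpha_\kappa - \ee'} \ge 1$ (using the standard convention $0^{\text{neg}} = +\infty$) and the bound reduces to the trivial $|\gamma^{(\kappa_1)}(t_1) - \gamma^{(\kappa)}(t_2)| \le 2M \le c$. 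The main points requiring verification are the continuity and uniform continuity of $\alpha_\kappa, \eta_\kappa$ that justify the partition, and the uniform a.s.\ boundedness of the curves used in the terminal regime; both are routine given the explicit lower-bound formulas and standard Loewner half-plane estimates, so the essential probabilistic work has already been done in Theorem~\ref{main-thm3}.
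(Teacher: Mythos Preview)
Your approximation argument via a finite partition of $[\ee,\kappa_0)$ combined with the trivial bound near $\kappa_0$ is correct and is precisely the kind of argument the paper intends; the paper itself gives no details beyond the phrase ``by an approximation argument.'' The only quibble is that continuity of $\alpha_\kappa,\eta_\kappa$ and their decay to zero as $\kappa\uparrow\kappa_0$ do not follow from the \emph{lower} bounds after \eqref{beta-est} alone, but rather from the monotonicity in $\kappa$ of the feasible region in the suprema defining $\alpha_\kappa,\eta_\kappa$ together with $\beta_\kappa\to 1$---which is indeed routine, as you say.
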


\end{document}